\newtheorem{theorem}{Theorem}
\newtheorem{definition}[theorem]{Definition}
\newtheorem{conjecture}[theorem]{Conjecture}
\newtheorem{corollary}[theorem]{Corollary}
\newtheorem{lemma}[theorem]{Lemma}
\newtheorem{remark}[theorem]{Remark}
\newcommand{\mt}{\mathtt}
\begin{document}
\title{Concatenative nonmonotonicity and optimal links in HP protein folding models}
\author{Bj{\o}rn Kjos-Hanssen}

\maketitle

\begin{abstract}
	The hydrophobic-polar (HP) model represents proteins as binary strings embedded in lattices, with fold quality measured by an energy score.
	We prove that the optimal fold energy is not monotonic under concatenation
	for several standard lattices, including the 2D and 3D rectangular, hexagonal, and triangular lattices.
	In other words, concatenating two polymers can produce a fold with strictly worse optimal energy than one of the polymers alone.
	
	For closed chains, we show that under the levels-of-hydrophobicity model of Agarwala et al. (1997), proper links can arise as uniquely optimal folds,
	revealing an unexpected connection between HP models and knot/link theory.
\end{abstract}

\section{Introduction}\label{sec:intro}
	Predicting the folded structure of a protein from its amino acid sequence has long been regarded as computationally intractable
	To make this precise, Ken A.~Dill~\cite{doi:10.1021/bi00327a032} introduced the hydrophobic-polar (HP) protein folding model in 1985.
	It was soon shown that finding an optimal fold in this model is NP-complete in both two and three dimensions~\cite{CGPPY:98,10.1145/279069.279080}.
	Despite remarkable empirical progress in protein structure prediction---most notably with Google's AlphaFold~\cite{alphafold}---the HP model
	remains a mathematically rich and fascinating abstraction.

	In the HP model, a protein is represented by a word $w$ over the alphabet $\{H,P\}$, where $H$ denotes hydrophobic and $P$ polar.  
	We will frequently identify $H=0$ and $P=1$.  
	A fold of $w$ is a self-avoiding walk in a lattice graph, with successive vertices labeled by the letters of $w$.  
	Each pair of non-consecutive $H$’s that become adjacent in the lattice contributes one point to the score:
	\[
	\xymatrix{
	H\ar@{-}[r] & P\ar@{-}[d]\\
	H\ar@{-}[r] & P\\		
	}
	\]
	An optimal fold is one that maximizes this score.

	We can view this as an example of parametric optimization. For a word $\theta\in\Theta= \{0,1\}^*$, and folds $x$, the \emph{optimal value} function $J$ is given by
	\[
		J(\theta) = \max_x f(x,\theta),
	\]
	where the \emph{fitness function} or \emph{objective function} $f$ gives the score
	of $\theta$ under the fold $x$. If we write $f = - E$ where $E$ stands for \emph{energy}, the task is to minimize energy.
	
	Equivalently, a fold $x$ may be described either as a sequence of occupied lattice sites, or as a sequence of moves between neighboring sites.

\paragraph{Automatic complexity.}
	There is an interesting similarity between the HP model and automatic complexity~\cite{KjosHanssen+2024}. 
	In the HP setting, each scored point arises from the polymer returning to a previously occupied lattice site. 
	This is analogous to an automaton revisiting a state. 
	In automatic complexity, minimizing the number of states encourages such revisits; in the HP model, maximizing the number of revisits corresponds (up to the hydrophobic/polar distinction) to minimizing the energy, which is simply the negative of the score.

\paragraph{Overview of the paper.}
	In \Cref{sec2}, we show that the optimal score in the HP model is non-monotone under concatenation,
	across several standard lattices (2D, 3D, triangular, and hexagonal).

	For the 2D rectangular and hexagonal lattices we can prove this follows from $Z$-optimality arguments,
	where $Z(w)$ is the number of hydrophobic monomers (zeros) in $w$.

	For 2D rectangular, 3D rectangular, and triangular lattice we use direct constructions and isoperimetric inequalities.

	These results are summarized in Table~\ref{refute}.

\begin{table}
	\begin{tabular}{l l l l}
		Lattice	&	Proof using $Z$-optimality	&Isoperimetric fact & Proof using that fact	\\
		\midrule
		triangular		&	not known			&\Cref{daisy}&	\Cref{iso_tri}\\
		hexagonal		&	\Cref{nov9-2024}&	not possible				\\
		2D rectangular	&	\Cref{key}			&\Cref{bl}	&	\Cref{iso-rect}\\
		3D rectangular	&	not known			&\Cref{bl3}	&	\Cref{nov23-2024_3D}\\
		\bottomrule
	\end{tabular}
	\caption{Summary of nonmonotonicity proofs for optimal folding in HP models across different lattices.}\label{refute}
\end{table}

In \Cref{sec:knots} we construct optimal closed folds in the 3D HP model that realize nontrivial knots and links. 
Finally, in \Cref{sec:indispensable} we show that under the \emph{levels of hydrophobicity} model of Agarwala et al.,
there are protein complexes for which proper links necessarily arise in every optimal folding.

\section{Nonmonotonicity}\label{sec2}

	\begin{definition}\label{df:J}
		Let $J_{\mathrm{rect}}(x), J_{\mathrm{cube}}(x), J_{\mathrm{tri}}(x), J_{\mathrm{hex}}(x)$ denote the maximum number of points achievable for a word $x$ in the
		2D and 3D rectangular, triangular, and hexagonal lattices, respectively.
	\end{definition}

	In a March 5, 2023, email message \cite{stecher}, Jack Stecher conjectured that the HP folding problem is ``weakly monotone'', i.e.,
	that whenever we add a prefix or suffix to a given word, our optimal score should never decrease.
	\begin{conjecture}[Stecher's \emph{cul-de-sac} conjecture]\label{conj}
		Let $x,y$ be binary words. Then $J_{\mathrm{rect}}(x)\le J_{\mathrm{rect}}(xy)$.
	\end{conjecture}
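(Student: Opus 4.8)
The plan is to interrogate \Cref{conj} rather than take it for granted, and I expect to refute it: the abstract already announces nonmonotonicity under concatenation, so the real task is to produce and certify binary words $x,y$ with $J_{\mathrm{rect}}(x)>J_{\mathrm{rect}}(xy)$. The name \emph{cul-de-sac} points at the mechanism. In a self-avoiding walk the two terminal monomers sit at degree-$4$ lattice sites with only one chain neighbor, so a hydrophobic \emph{endpoint} has three free lattice slots where an interior hydrophobic monomer has two, and in a tightly packed fold a buried monomer genuinely uses all three of its slots. Forming $xy$ demotes the last monomer of $x$ from endpoint to interior, so if every score-maximizing fold of $x$ really cashes in that third slot, no fold of $xy$ can keep up. The naive guess $x=0^{n}$ --- folded into a compact spiral whose last monomer is buried --- is instructive but fails: \Cref{conj} survives there because $0^{n+1}$ can re-pack, as a $2\times\lceil(n+1)/2\rceil$ snake or along another Hamiltonian path of the same polyomino, and recover the score. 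The lesson is that the construction must be rigged so the optimum is reached \emph{only} through a feature a short suffix destroys; that is the work done by the $Z$-optimality analysis, \Cref{key}.

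The first step is to make the obstruction quantitative by proving an upper bound $J_{\mathrm{rect}}(w)\le B(w)$ that sees more than the zero-count $Z(w)$. Since the square lattice is bipartite, a scored contact can only join a zero at an even word-position to a zero at an odd word-position; writing $a\le b$ for these two counts, an edge/boundary-counting (isoperimetric) estimate --- packaged in the paper as \Cref{bl} and \Cref{iso-rect} --- bounds the number of contacts in terms of $a$, $b$, and the number $\varepsilon\in\{0,1,2\}$ of hydrophobic chain endpoints, with $\varepsilon$ contributing a genuine additive term. The property I want from $B$ is its behaviour under right-concatenation by a purely polar block $y=1^{k}$: the monomers of $x$ keep their word-positions, so $a$, $b$, and $Z$ are unchanged, and $y$ contributes no zeros and can never be scored; yet if $x$ both begins and ends with $0$, the terminal zero is demoted to the interior, $\varepsilon$ drops from $2$ to $1$, and hence $B(xy)<B(x)$.

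The second step is to exhibit a word $x$ that begins and ends with $0$, has the parity profile that keeps the $\varepsilon$-term active at an optimum, and admits an explicit fold attaining $B(x)$, so that $J_{\mathrm{rect}}(x)=B(x)$; a compact near-square block of zeros on the correct sublattice, with both chain endpoints placed at deep-interior sites of the block, is the natural candidate and the heart of \Cref{key}. Then, with $y=1^{k}$,
\[
  J_{\mathrm{rect}}(xy)\ \le\ B(xy)\ <\ B(x)\ =\ J_{\mathrm{rect}}(x),
\]
which refutes \Cref{conj}.

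The hard part is achieving tightness on both sides at once. The bound $B$ must be met \emph{exactly} by the constructed $x$, which forces careful parity bookkeeping: because the lattice is bipartite and so has no short odd cycles, ``nearby'' zeros cannot all be mutually in contact, and the densest legal packing --- together with the matching isoperimetric estimate for the chosen block shape --- is subtler than a naive area count suggests. Just as delicate is arranging that the optimum of $x$ truly \emph{depends} on the endpoint term; the $0^{n}$ experience shows that an optimum reached by an unlucky fold can be rescued by a clever re-folding, so one must verify that no re-folding of $xy$ recovers the score. This is exactly why the whole argument has to run through the bound --- comparing $B(xy)$ with $B(x)=J_{\mathrm{rect}}(x)$ --- rather than through an enumeration of folds: right-concatenation only inflates the crude $Z$-based estimate, so all of the leverage must come from the refined $\varepsilon$-term surviving into the lower-bound construction and from the impossibility of a polar suffix ever compensating for the slot it destroys.
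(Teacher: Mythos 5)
Your proposal is correct in substance and is essentially the paper's own refutation: the endpoint-sensitive handshake bound $J_{\mathrm{rect}}(w)\le Z(w)+\lfloor\varepsilon/2\rfloor$ (where $\varepsilon$ counts hydrophobic chain ends), combined with a word attaining $Z+1$ with both ends hydrophobic, is exactly the \emph{$Z$-optimality} route of \Cref{agarwala}, \Cref{key}, and \Cref{culdesacfalse}. The differences are minor: you pad with $1^k$ on one side only (which needs the $\varepsilon=1$ case of the bound plus integrality of the score) where the paper pads with $1x1$ and invokes the reversal remark; the bound you need comes from the degree/handshake count rather than from the isoperimetric results \Cref{bl} and \Cref{iso-rect} you point to; and the extremal word itself, which you describe only qualitatively and which is the genuinely nontrivial constructive content, is supplied explicitly by the fold in \Cref{key}.
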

	Clearly the reverse $x^R$ of a word $x$ satisfies $J(x)=J(x^R)$. Therefore, we could equivalently conjecture that $J_{\mathrm{rect}}(y)\le J_{\mathrm{rect}}(xy)$.
	Thus, the conjecture would imply that for all $y,y'$,
	\begin{equation}\label{referee-last}
		J_{\mathrm{rect}}(x)
		\le J_{\mathrm{rect}}(xy')
		\le J_{\mathrm{rect}}(yxy').
	\end{equation}
	Biologically we may expect \Cref{conj} to fail: proteins can have a hydrophobic core in their interior, which is size-limited.
	To refute the conjecture we make use of \Cref{key}, which is also of independent interest.
	While \Cref{key} appears to be new, a special case with $Z=6$ was found by Matthew Gilzinger \cite[Figure 13]{Gilzinger2012}.

	\begin{theorem}[{Agarwala et al.~\cite[Lemma 2.1]{ABDDFHMS97}}]\label{agarwala}
		For all $w$, $J_{\mathrm{rect}}(1w1)\le Z(w)$, where $Z(w)$ is the number of zeros in $w$.
	\end{theorem}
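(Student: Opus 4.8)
The plan is to prove the bound by a double-counting argument over the edges of the square lattice, using only that the lattice is $4$-regular and that the two chain-endpoints of $1w1$ carry the letter $1$, not $0$.

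Fix any self-avoiding walk realizing a fold of $1w1$ and let $s$ be its score; the goal is $s\le Z(w)$. Write $1w1=a_0a_1\cdots a_{n+1}$ with $a_0=a_{n+1}=1$ and $a_1\cdots a_n=w$, and let $v_0,\dots,v_{n+1}$ be the occupied lattice sites. By definition $s$ counts the unordered pairs $\{i,j\}$ with $|i-j|\ge 2$, $a_i=a_j=0$, and $v_i$ adjacent to $v_j$; call such a pair a \emph{contact} and the edge $v_iv_j$ a \emph{contact edge}. Every contact edge has both endpoints labelled $0$, and the occurrences of $0$ in $1w1$ are exactly the $Z(w)$ occurrences of $0$ in $w$. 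Now, for an index $i$ with $a_i=0$, bound the number $d_i$ of contact edges incident to $v_i$: the site $v_i$ has exactly $4$ lattice neighbours, and since $a_0=a_{n+1}=1$ the index $i$ is interior ($1\le i\le n$), so $v_{i-1}$ and $v_{i+1}$ are two of these four neighbours and the edges to them are not contact edges (consecutive monomers). Hence $d_i\le 4-2=2$. Summing over the $Z(w)$ zeros and noting that each contact edge is counted exactly twice, once at each of its two $0$-labelled endpoints, we get $2s=\sum_{i:\,a_i=0} d_i\le 2Z(w)$, so $s\le Z(w)$.

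The only step that needs care is the role of the flanking $1$'s: a $0$ sitting at a chain-endpoint would have only one backbone neighbour among its four lattice neighbours and could therefore support up to $3$ contact edges, in which case the same count would only yield $s\le Z(w)+1$. So the hypothesis that the word has the form $1w1$ is used precisely to force every $0$-monomer to be interior. No isoperimetric input or parity/bipartiteness argument is required here, though one could equivalently phrase the count by $2$-colouring the square lattice and observing that contacts only join $0$'s of opposite parity.
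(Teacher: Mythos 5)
Your proof is correct and is essentially identical to the paper's argument: each zero in $1w1$ is interior, so two of its four lattice neighbours are consumed by the backbone, leaving at most two potential contact edges, and the handshaking count over contact edges then gives $s\le Z(w)$. Your write-up just makes the double counting and the role of the flanking $1$'s more explicit.
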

	\begin{proof}
		Each zero in $x=1w1$ is internal, i.e., it is neither the first nor the last bit of $x$.
		Therefore, among the \textbf{four} ``heavenly'' directions from the location of the zero,
		two are occupied by the previous and the next amino acids, and the other \textbf{two} could each contribute to a point.
		As each edge is shared by two vertices, by the Handshaking Lemma each zero contributes on average at most \textbf{one} point.
	\end{proof}
	When the word may start or end with a zero the bound becomes:
	\begin{theorem}[{\cite[Fact 1]{AICHHOLZER2003139}}]
		For all $w$, $J_{\mathrm{rect}}(w)\le Z(w)+1$.
	\end{theorem}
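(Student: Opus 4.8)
The plan is to reduce the general bound $J_{\mathrm{rect}}(w)\le Z(w)+1$ to the already-proved bound $J_{\mathrm{rect}}(1w1)\le Z(w)$ of \Cref{agarwala} by a padding argument. The idea is that the only reason \Cref{agarwala} can fail to apply to an arbitrary $w$ is that $w$ may begin or end with a zero, and such a \emph{boundary} zero has three, rather than two, free heavenly directions available to it. So I would first handle the case where $w$ already starts and ends with a $1$: then $w = 1w'1$ for some $w'$ with $Z(w') = Z(w)$, and \Cref{agarwala} gives $J_{\mathrm{rect}}(w)\le Z(w') = Z(w)\le Z(w)+1$ directly.

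For the remaining cases, I would pad $w$ on the left and/or right with a single $1$ so that the padded word $w^\ast$ starts and ends with $1$; concretely, set $w^\ast = 1w$ if $w$ starts with $0$ and ends with $1$, $w^\ast = w1$ in the mirror case, and $w^\ast = 1w1$ if $w$ both starts and ends with $0$. In every case $Z(w^\ast) = Z(w)$, since we only added $1$'s, and $w^\ast$ is of the form $1\,(\cdot)\,1$, so \Cref{agarwala} applies and yields $J_{\mathrm{rect}}(w^\ast)\le Z(w^\ast) = Z(w)$. It remains to compare $J_{\mathrm{rect}}(w)$ with $J_{\mathrm{rect}}(w^\ast)$.

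The key step is the monotonicity-type inequality $J_{\mathrm{rect}}(w)\le J_{\mathrm{rect}}(w^\ast) + 1$ when $w^\ast$ is obtained from $w$ by adding at most two boundary $1$'s — or really just the observation that adding a single monomer to the end of a fold can destroy at most one point. Given an optimal fold $x$ of $w$ achieving $J_{\mathrm{rect}}(w)$ points, one wants to extend it to a fold of $w^\ast$: the newly appended $1$-vertex must be placed at a free lattice site adjacent to the current endpoint. If $w$ starts or ends with $1$ we are adding only one vertex; a free neighbouring site always exists for a finite fold (the fold occupies finitely many sites, and the endpoint has neighbours, at most all but one of which can be occupied — in the worst case we may have to first reroute, but more simply: the extended word $w1$ satisfies $J_{\mathrm{rect}}(w1)\ge J_{\mathrm{rect}}(w)$ since any fold of $w$ can be extended by one step into an unoccupied neighbour, which exists because $w$ has finite length and the grid is infinite). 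Hence $J_{\mathrm{rect}}(w^\ast)\ge J_{\mathrm{rect}}(w)$ when only one $1$ is added, giving $J_{\mathrm{rect}}(w)\le Z(w)\le Z(w)+1$; and when two $1$'s are added (the case $w = 0w'0$) we get $J_{\mathrm{rect}}(w)\le J_{\mathrm{rect}}(w^\ast) = J_{\mathrm{rect}}(1w1)\le Z(w) \le Z(w)+1$ as well. So in fact the bound $Z(w)+1$ comes out as $Z(w)$ in all of these subcases, and the $+1$ slack is only genuinely needed once one tracks the argument more carefully in a situation I may have collapsed — which brings me to the anticipated obstacle.

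The main obstacle is that the clean padding reduction above seems to prove the \emph{stronger} statement $J_{\mathrm{rect}}(w)\le Z(w)$ for \emph{all} $w$, which is false (e.g. the single word $0$ has $J=0=Z$, but $00$ has $J_{\mathrm{rect}}=0 < 1 = Z$ — fine — yet short words like $010$ or palindromic hydrophobic blocks can be packed so that every zero truly contributes, and the extremal examples such as $w$ a solid $H$-square of side $k$ achieve exactly $Z(w)$ only when $w$ starts and ends with $1$). The resolution, and the real content, is that when $w$ starts \emph{or} ends with $0$, that boundary zero genuinely has an extra free direction, so the Handshaking count gives it a budget of $3/2$ rather than $1$; rounding, a single boundary zero can contribute one \emph{extra} half-point beyond the interior budget, and with two boundary zeros the two extra half-points sum to one whole extra point. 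Thus the honest proof is a direct re-run of the Handshaking argument of \Cref{agarwala} without padding: sum over all zeros the number of their lattice-neighbours that are occupied by non-consecutive $H$'s, note each interior zero contributes at most $2$ to this sum and each of the (at most two) boundary zeros contributes at most $3$, divide by $2$ because each scoring edge is counted from both endpoints, and conclude $J_{\mathrm{rect}}(w)\le \tfrac12(2(Z(w)-b) + 3b) = Z(w) + \tfrac{b}{2}\le Z(w)+1$, where $b\in\{0,1,2\}$ is the number of boundary zeros. The only subtlety to check is that a scoring edge between two boundary zeros (possible only when $w$ has length giving first and last bits both $0$ and they happen to be lattice-adjacent) is still counted correctly, which it is, since the division by $2$ accounts for it. I would present this direct argument as the proof, mentioning the padding heuristic only as intuition for why the $+1$ appears.
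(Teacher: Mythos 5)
Your final argument---summing over the zeros the number of free ``heavenly'' directions (at most $2$ for an interior zero, at most $3$ for each of the at most two boundary zeros) and halving by the Handshaking Lemma to get $J_{\mathrm{rect}}(w)\le Z(w)+\tfrac{b}{2}\le Z(w)+1$---is exactly the paper's proof, and it is correct. However, one claim in the padding route you (rightly) discarded deserves a concrete warning: the inequality $J_{\mathrm{rect}}(w1)\ge J_{\mathrm{rect}}(w)$, which you justify by saying that ``any fold of $w$ can be extended by one step into an unoccupied neighbour, which exists because the grid is infinite,'' is false. The infinitude of the grid is irrelevant; the endpoint of an optimal fold can be buried with all four of its neighbours occupied by earlier monomers, and the impossibility of such an extension is precisely the \emph{cul-de-sac} phenomenon that \Cref{key} and \Cref{culdesacfalse} of this paper establish. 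The true inequality runs the other way, $J_{\mathrm{rect}}(w1)\le J_{\mathrm{rect}}(w)$, by restricting a fold of $w1$ to $w$ (\Cref{nov10-24}). So your instinct that the padding argument ``proves too much'' was the right alarm bell, but the broken step is that monotonicity claim, not the bookkeeping of the extra half-points; had the padding argument been sound, it would have proved Stecher's conjecture rather than this theorem.
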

	\begin{proof}
		There are now up to \textbf{three} available heavenly directions where a point may be earned at the first and last bits of $w$.
		The upper bound is therefore, with $w=w_1\dots w_n$, each $w_i\in\{0,1\}$,
		\[
			\frac{3\cdot 1_{w_1=0}+(\sum_{i=2}^{n-1}2\cdot 1_{w_i=0})+3\cdot 1_{w_n=0}}2
			\le \left(\sum_{i=1}^n 1_{w_i=0}\right) + 1
			= Z(w)+1.
		\]
		Here, the indicator symbol $1_P$ is 1 if $P$ holds and 0 otherwise.
	\end{proof}

\begin{theorem}\label{key}
	There are infinitely many words $w$ with $J_{\mathrm{rect}}(w)=Z(w)+1$.
\end{theorem}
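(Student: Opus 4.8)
The bound $J_{\mathrm{rect}}(w)\le Z(w)+1$ is already in hand, so it suffices to exhibit, for arbitrarily large $Z$, an explicit word $w$ together with a fold of $w$ attaining $Z(w)+1$ contacts; I would produce $w$ as the $H/P$ label-sequence of a designed self-avoiding walk in $\mathbb{Z}^2$. First I would record what equality requires. Tightness of the handshaking bound forces $w_1=w_n=0$; it forces every interior $H$ to earn exactly two contacts, so its two lattice neighbours other than its chain predecessor and successor must both be $H$; and it forces each terminal $H$ to earn three, so all three of its non-chain neighbours must be $H$. Two consequences: no $H$ site may have an empty lattice neighbour, and, if moreover the walk visits every $H$ site in isolation (both chain neighbours being $P$ — the case I will arrange), the set $R$ of $H$ sites is a polyomino whose adjacency graph has exactly $|R|+1$ edges, exactly two vertices of degree $3$, and all others of degree $2$.

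Such an $R$ is a ``dumbbell'': I would take two $2\times2$ blocks of $H$'s joined by a straight, one-cell-wide corridor of $H$'s, with the two cells where the corridor meets the blocks being the degree-$3$ vertices; these must be the endpoints of the walk. Now the walk is essentially forced: at each block it threads the four $H$ cells one at a time, using the ring of surrounding $P$ cells as connectors; at a corridor cell $(0,j)$ it must enter from $(-1,j)$ and leave to $(1,j)$ (or vice versa), so the bulk of the walk is a fixed zig-zag through the three-row strip around the corridor, picking up the required $P$ sites $(\pm1,j)$. One then assembles the two block-threadings, the zig-zag, and all forced $P$-connectors into a single self-avoiding walk terminating at the two degree-$3$ cells; the associated word has $Z(w)=|R|$, and because no two $H$'s are ever consecutive along the walk, all $|R|+1$ adjacencies of $R$ become genuine contacts, so $J_{\mathrm{rect}}(w)=Z(w)+1$. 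Letting the corridor lengthen produces infinitely many $w$.

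The crux is this final assembly. The block-threadings and the corridor zig-zag are each rigid, and they must meet with compatible parity at both ends — the zig-zag has to reach a corridor end on the $(-1,\cdot)$ side, not the $(1,\cdot)$ side — and without collisions; this constrains the corridor length modulo $2$, so ``infinitely many'' is obtained along one residue class, the other being recovered by reflecting one $2\times2$ block across the corridor, which mirrors its threading and flips the parity. I would also check the two small $P$ cells where a block abuts the corridor: each is adjacent to two $H$ sites, hence its position in the word is forced, and one must confirm that no $P$ cell is adjacent to three $H$'s. Beyond these parity and incidence checks, the argument is routine bookkeeping once the two blocks are taken far enough apart.
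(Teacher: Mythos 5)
Your proposal is correct, but it takes a genuinely different route from the paper's. The paper simply exhibits one explicit family, $(011)^3 1^{10} (0011)^k 0110 (1100)^k 110$, together with a fold in which the zeros form a $2\times(2k+3)$ rectangle; that word contains $00$ factors, so not every lattice adjacency between zeros scores, and the value $Z(w)+1$ is read off the picture and matched against the known upper bound. You instead extract the structural consequences of tightness in the degree-counting bound (both endpoints hydrophobic, every interior $H$ realizing both free directions, each terminal $H$ realizing all three), restrict to words in which every $H$ is isolated so that the $H$-set $R$ must have exactly $|R|+1$ adjacencies with exactly two degree-3 cells, and then realize such an $R$ as a dumbbell threaded by a walk crossing each corridor cell transversally. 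I checked a concrete instance of your assembly: with one block $\{(0,0),(1,0),(0,-1),(1,-1)\}$, a vertical corridor $(0,1),\dots,(0,m-1)$, and the walk beginning $(0,0),(-1,0),(-1,-1),(0,-1),(0,-2),(1,-2),(1,-1),(2,-1),(2,0),(1,0),(1,1),(0,1),(-1,1)$ before zig-zagging up the three-column strip, every interior $H$ gets exactly its two non-chain neighbours as non-consecutive $H$'s, each terminal gets three, no $P$ on the walk is adjacent to an $H$ other than its chain neighbours, and the parity mismatch at the far block is absorbed by choosing which side of the corridor that block sits on, exactly as you indicate. So the crux you flag does go through; what your write-up still owes the reader is the explicit walk (or a clean inductive description of it), since ``routine bookkeeping'' is doing real work there. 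What your approach buys is an explanation of \emph{why} equality is attainable and what extremal configurations must look like---in particular that the hydrophobic core need not be a rectangle or a ball---at the cost of a longer verification; the paper's approach buys immediacy: one picture and one inequality.
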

\begin{proof}
	There are examples of each length of the form $26+8k$, $k\ge 0$:
	\begin{equation}\label{nov2024}
		(011)^3 1^{10} (0011)^k 0110 (1100)^k 110
	\end{equation}
	A suitable fold is defined by the $25+8k$ moves
	\[
	urdrdldrr u^4 l^4 dd (luld)^k ldr (drur)^k dru,
	\]
	where $u$ is ``up'', $l$ is ``left'', $d$ is ``down'', and $r$ is ``right''.
	The case $k=3$ is shown below.
\[
\xymatrix@C=1.7em@R=1.7em{
			&				&				&			  &				&			&			&1\ar@{-}[d]&	1\ar@{-}[l]	&	1\ar@{-}[l]	&	1\ar@{-}[l]	&	1\ar@{-}[l]\\
			& 1\ar@{-}[d]	& 1\ar@{-}[l]	& 1\ar@{-}[d] & 1\ar@{-}[l]	&1\ar@{-}[d]&1\ar@{-}[l]&1\ar@{-}[d]&	1\ar@{-}[r]	&	1\ar@{-}[d]	&	&	1\ar@{-}[u]\\
1\ar@{-}[d]	& 0\ar@{-}[l]	& 0\ar@{-}[u]	& 0\ar@{-}[l] & 0\ar@{-}[u]	&0\ar@{-}[l]&0\ar@{-}[u]&0\ar@{-}[l]&	*+[Fo]{0}\ar@{-}[u]	&	0\ar@{-}[r]	&	1\ar@{-}[d]	&	1\ar@{-}[u]\\
1\ar@{-}[r]	& 0\ar@{-}[d]	& 0\ar@{-}[r]	& 0\ar@{-}[d] & 0\ar@{-}[r]	&0\ar@{-}[d]&0\ar@{-}[r]&0\ar@{-}[d]&	*+[Fo]{0}		&	0\ar@{-}[d]	&	1\ar@{-}[l]	&	1\ar@{-}[u]\\
			& 1\ar@{-}[r]	& 1\ar@{-}[u]	& 1\ar@{-}[r] & 1\ar@{-}[u] &1\ar@{-}[r]&1\ar@{-}[u]&1\ar@{-}[r]&	1\ar@{-}[u]	&	1\ar@{-}[r]	&	1\ar@{-}[r]	&	1\ar@{-}[u]\\
}
\]
\end{proof}

\begin{corollary}\label{culdesacfalse}
	The \emph{cul-de-sac} conjecture is false.
\end{corollary}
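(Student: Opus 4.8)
The plan is to derive \Cref{culdesacfalse} as an immediate consequence of \Cref{key} together with \Cref{agarwala}. The key observation is that the words exhibited in \Cref{key} are engineered to begin and end with a zero — or can be assumed to do so — so that the gap between the trivial upper bound $Z(w)+1$ and the bound $Z(w)$ available for words framed by ones becomes exploitable.

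Concretely, let $w$ be one of the infinitely many words from \Cref{key}, so that $J_{\mathrm{rect}}(w)=Z(w)+1$. Inspecting the family in \eqref{nov2024}, each such $w$ ends in the block $110$, hence ends in a zero; by taking the reverse (and recalling $J(x)=J(x^R)$, while $Z(x)=Z(x^R)$) we obtain an equally good word that \emph{begins} with a zero. Actually it is cleanest to produce a word that both begins and ends with zero: replace $w$ by $w' = 0 w 0$ if necessary, or simply argue directly with a prefix. The step I would carry out is: choose $w$ with $J_{\mathrm{rect}}(w)=Z(w)+1$ and $w$ beginning with $0$ (available by reversal), then consider the longer word $x = 1w$. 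By \Cref{agarwala} applied with the trailing $1$ supplied by noting $x=1w$ where $w$ itself ends in a structure we can cap — more carefully, form $y = 1 w 1$, so \Cref{agarwala} gives $J_{\mathrm{rect}}(1 w 1) \le Z(w)$. But $Z(w)=Z(1w1)$, and on the other hand $w = (1w1)$ restricted... no: rather, $w$ is \emph{not} a prefix of $1w1$. The correct comparison is: $J_{\mathrm{rect}}(w) = Z(w)+1 > Z(w) \ge J_{\mathrm{rect}}(1w1)$, and $w$ is a factor of $1w1$ obtained by deleting a one-letter prefix and a one-letter suffix. Since $1w1 = 1 \cdot (w1)$ and also $1w1 = (1w) \cdot 1$, and \Cref{conj} would give $J_{\mathrm{rect}}(w1) \le J_{\mathrm{rect}}(1w1)$, while the remark after \Cref{conj} (the reversal trick applied again) gives $J_{\mathrm{rect}}(w) \le J_{\mathrm{rect}}(w1)$ — here using that $w$ is a prefix of $w1$. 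Chaining, $Z(w)+1 = J_{\mathrm{rect}}(w) \le J_{\mathrm{rect}}(w1) \le J_{\mathrm{rect}}(1w1) \le Z(w)$, a contradiction.

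So the proof is: take $w$ as in \Cref{key}; apply the conjecture twice (once to prepend $1$, once to append $1$, using the reversal symmetry $J_{\mathrm{rect}}(x)=J_{\mathrm{rect}}(x^R)$ to handle the prefix/suffix asymmetry as in \eqref{referee-last}); conclude $J_{\mathrm{rect}}(1w1) \ge J_{\mathrm{rect}}(w) = Z(w)+1$; and contradict \Cref{agarwala}, which asserts $J_{\mathrm{rect}}(1w1) \le Z(w)$. The only point requiring a moment's care — and the one I would state explicitly — is the bookkeeping that lets us pad on \emph{both} sides: the conjecture as literally stated ($J_{\mathrm{rect}}(x)\le J_{\mathrm{rect}}(xy)$) together with its reversed form ($J_{\mathrm{rect}}(y)\le J_{\mathrm{rect}}(xy)$) is exactly what \eqref{referee-last} records, and applying it with $x=w$, then with the suffix $1$ and prefix $1$, yields $J_{\mathrm{rect}}(w)\le J_{\mathrm{rect}}(1w1)$. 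There is no real obstacle here; the corollary is a two-line deduction once \Cref{key} and \Cref{agarwala} are in hand, and the ``hard part'' — constructing the extremal family $w$ and its fold — has already been done in the proof of \Cref{key}.

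\begin{proof}
	Let $w$ be any of the infinitely many words furnished by \Cref{key}, so that $J_{\mathrm{rect}}(w)=Z(w)+1$. Suppose, for contradiction, that \Cref{conj} holds. Applying \eqref{referee-last} with the roles chosen so that $w$ is padded by a single $1$ on each side, we get
	\[
		J_{\mathrm{rect}}(w) \le J_{\mathrm{rect}}(1w1).
	\]
	On the other hand, \Cref{agarwala} gives $J_{\mathrm{rect}}(1w1)\le Z(w)$. Combining,
	\[
		Z(w)+1 = J_{\mathrm{rect}}(w) \le J_{\mathrm{rect}}(1w1) \le Z(w),
	\]
	which is absurd. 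Hence \Cref{conj} is false.
\end{proof}
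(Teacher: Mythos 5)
Your final proof is exactly the paper's argument: take a word $w$ from \Cref{key} with $J_{\mathrm{rect}}(w)=Z(w)+1$, use the conjecture in the two-sided form \eqref{referee-last} to get $J_{\mathrm{rect}}(w)\le J_{\mathrm{rect}}(1w1)$, and contradict the bound $J_{\mathrm{rect}}(1w1)\le Z(w)$ from \Cref{agarwala}. The preliminary musings about $w$ beginning or ending in a zero are unnecessary (and you correctly discard them), but the proof itself is correct and identical in substance to the paper's.
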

\begin{proof}
	Suppose otherwise.
	Let $x$ be a word of the form~\eqref{nov2024}.
	Then we have the arithmetic contradiction
	\begin{align*}
		Z(x)+1 && = &&J_{\mathrm{rect}}(x) && \text{by \Cref{key},}\\
		       &&\le&& J_{\mathrm{rect}}(1x1) &&\text{by the \emph{cul-de-sac} conjecture \eqref{referee-last},}  \\
		    &&\le&& Z(x) &&\text{by \Cref{agarwala}.}
	\end{align*}
\end{proof}

The vertices of the \emph{hexagonal lattice} are vertices of hexagons and form a 3-regular graph.

\begin{theorem}\label{hex}
	For all $w$, $J_{\mathrm{hex}}(1w1)\le Z(w)/2$.
\end{theorem}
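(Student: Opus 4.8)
The plan is to run the same Handshaking‑Lemma counting argument used for Theorem~\ref{agarwala}, but adapted to the fact that the hexagonal lattice is $3$‑regular rather than $4$‑regular. In the word $1w1$, every zero is internal, so at the lattice site occupied by a given zero, two of the three incident edges are consumed by the bonds to the preceding and following monomers along the chain. That leaves exactly \textbf{one} free incident edge per zero. A scored contact is an edge of the lattice joining two non‑consecutive $H$'s (zeros), so each such edge is incident to two zeros and each zero has at most one free edge; hence the number of scored edges is at most $Z(w)/2$. That is the whole idea — it is even cleaner than the rectangular case, since there is no ``$+1$'' slack to worry about because the endpoints of $1w1$ are $1$'s and contribute nothing.

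First I would state that each zero in $x=1w1$ is internal (neither the first nor last bit), exactly as in the proof of Theorem~\ref{agarwala}. Second, I would invoke $3$‑regularity of the hexagonal lattice: from the site of an internal monomer there are three incident lattice edges, two of which are occupied by the chain. Third, I would observe that a scored contact uses only free edges and joins two zeros, so summing ``free edges at zeros'' over all zeros counts each scored edge at least twice; since each zero contributes at most $1$ to this sum, the total number of scored edges is at most $Z(w)/2$.

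The one genuine subtlety — and the step I would be most careful about — is the claim that an internal monomer really does consume \emph{two distinct} incident edges for its chain bonds, i.e.\ that the predecessor and successor occupy two different neighboring sites. This is immediate from the walk being self‑avoiding: the predecessor and successor are distinct vertices, and in a simple graph distinct neighbors are reached by distinct edges. (One should note this uses that the hexagonal lattice is a simple graph, which it is.) After that, the inequality $\#\{\text{scored edges}\}\le \sum_{\text{zeros}} (\text{free edges at that zero})/2 \le Z(w)/2$ is just arithmetic, and since $J_{\mathrm{hex}}(1w1)$ is by definition the maximum over folds of the number of scored edges, taking the maximum preserves the bound. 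No isoperimetric input is needed, consistent with the ``not possible'' entry for the hexagonal row in Table~\ref{refute}.
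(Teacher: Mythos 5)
Your proposal is correct and is essentially identical to the paper's own proof: both use the $3$-regularity of the hexagonal lattice to conclude that each internal zero has exactly one free incident edge, and then apply the Handshaking-Lemma double-counting to bound the number of scored edges by $Z(w)/2$. Your added remark about the self-avoiding walk guaranteeing two \emph{distinct} chain edges at each internal monomer is a small but harmless elaboration of the same argument.
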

\begin{proof}
	Each zero in $x=1w1$ is internal, i.e., it is neither the first nor the last bit of $x$.
	Therefore, among the \textbf{three} ``heavenly'' directions from the location of the zero,
	two are occupied by the previous and the next amino acids, and the other \textbf{one} could contribute to a point.
	As each edge is shared by two vertices, by the Handshaking Lemma each zero contributes on average at most \textbf{one half} of a point.
\end{proof}
When the word may start or end with a zero the bound becomes:
\begin{theorem}
	For all $w$, $J_{\mathrm{hex}}(w)\le Z(w)/2+1$.
\end{theorem}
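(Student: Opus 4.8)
The plan is to mimic the counting argument used in the proof of \Cref{agarwala}, but now accounting for the two endpoints of $w=w_1\dots w_n$ which, unlike internal zeros, have three rather than two free ``heavenly'' directions in the $3$-regular hexagonal lattice. First I would fix an arbitrary fold of $w$ and count contacts via the Handshaking Lemma: each scoring contact is an edge of the lattice joining two occupied vertices that are non-consecutive in $w$, so if $D$ denotes the total degree-sum over all occupied vertices counting only such contact-edges, then $J_{\mathrm{hex}}(w)$ equals $D/2$. Next I would bound the contribution of each vertex $v$ carrying letter $w_i$: if $w_i=1$ the vertex contributes $0$; if $w_i=0$ and $2\le i\le n-1$ it contributes at most $1$ (three neighbors, two used by $w_{i-1}$ and $w_{i+1}$); and if $w_i=0$ with $i\in\{1,n\}$ it contributes at most $2$ (three neighbors, only one used by the single chain-neighbor).

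Summing these per-vertex bounds gives
\[
	J_{\mathrm{hex}}(w) \;=\; \frac{D}{2}
	\;\le\; \frac{2\cdot 1_{w_1=0} + \bigl(\sum_{i=2}^{n-1} 1\cdot 1_{w_i=0}\bigr) + 2\cdot 1_{w_n=0}}{2}
	\;\le\; \frac{\bigl(\sum_{i=1}^{n} 1_{w_i=0}\bigr) + 2}{2}
	\;=\; \frac{Z(w)}{2} + 1,
\]
exactly as in the $Z(w)+1$ bound for the rectangular lattice, but with every ``heavenly'' count reduced by one because the hexagonal lattice is $3$-regular instead of $4$-regular. The degenerate short cases ($n\le 2$) should be checked separately but are immediate since then $Z(w)\le 2$ and no non-consecutive pair can be adjacent when $n\le 2$, so $J_{\mathrm{hex}}(w)=0\le Z(w)/2+1$.

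I do not expect a serious obstacle here; the argument is a routine adaptation of the two preceding proofs. The one point requiring a moment's care is the endpoint bookkeeping: a zero at position $1$ or $n$ has only one chain-neighbor, hence two potentially scoring directions, so it can contribute up to $2$ to $D$ (not $3$, since a vertex of degree $3$ with one neighbor used leaves only $2$ free), and if both $w_1$ and $w_n$ are zero the total endpoint surplus over the ``internal'' count is $2\cdot 2 - 2\cdot 1 = 2$, which is precisely the $+2$ inside the numerator, yielding the additive $+1$ after dividing by $2$. One should also note in passing that when $w_1$ and $w_n$ coincide at the same lattice site — impossible, since a fold is a self-avoiding walk of length $n\ge 1$ with distinct endpoints unless $n=1$ — no double-counting issue arises.
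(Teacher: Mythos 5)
Your proof is correct and takes essentially the same route as the paper: the identical per-vertex degree count (at most one free direction for an internal zero, two for an endpoint zero, in the $3$-regular lattice) followed by the Handshaking Lemma, arriving at the same numerator $2\cdot 1_{w_1=0}+\sum_{i=2}^{n-1}1_{w_i=0}+2\cdot 1_{w_n=0}$ divided by $2$. The only additions beyond the paper's argument are the explicit check of the degenerate cases $n\le 2$ and the endpoint bookkeeping remarks, which are fine but not needed.
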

\begin{proof}
	There are now up to \textbf{two} available heavenly directions where a point may be earned at the first and last bits of $w$.
	The upper bound is therefore, with $w=w_1\dots w_n$, each $w_i\in\{0,1\}$,
	\[
		\frac{2\cdot 1_{w_1=0}+(\sum_{i=2}^{n-1}1\cdot 1_{w_i=0})+2\cdot 1_{w_n=0}}2
		\le \frac12\left(\sum_{i=1}^n 1_{w_i=0}\right) + 1
		= Z(w)/2+1.
	\]
\end{proof}
A hexagonal lattice fold achieving the maximum score $Z(w)/2+1=6$ in terms of the number of zeros $Z(w)=10$ in the folded word is shown below.
\begin{equation}\label{platypus}
\xymatrix@R=1em@C=1em{
	&	1\ar@{-}[dr]\ar@{-}[dl]	&	\\
1\ar@{-}[d]	&		&	1\ar@{-}[d]\\
1	&		&	*+[Fo]{0}	&		&	1\ar@{-}[dl]\ar@{-}[dr]	&		&	1\ar@{-}[dl]\ar@{-}[dr]	&		&	1\ar@{-}[dl]\ar@{-}[dr]	&		&	1\ar@{-}[dl]\ar@{-}[dr]	&	\\
	&	0\ar@{-}[ul]	&		&	*+[Fo]{0}	&		&	0	&		&	0	&		&	0	&		&	1\ar@{-}[d]\\
	&	1\ar@{-}[u]	&		&	0	&		&	0	&		&	0	&		&	0	&		&	1\ar@{-}[dl]\\
	&		&	1\ar@{-}[ul]\ar@{-}[ur]	&		&	1\ar@{-}[ul]\ar@{-}[ur]	&		&	1\ar@{-}[ul]\ar@{-}[ur]	&		&	1\ar@{-}[ul]\ar@{-}[ur]	&		&	1\ar@{-}[ul]	&\\
}
\end{equation}

\begin{theorem}\label{nov9-2024}
	For each integer $k\ge 1$, the word $w_k = 01^4 011 (01)^k 11 (10)^k$ satisfies $J_{\mathrm{hex}}(w_k)=Z(w_k)/2+1$.
\end{theorem}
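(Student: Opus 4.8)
The plan is to prove the two inequalities separately: $J_{\mathrm{hex}}(w_k)\ge Z(w_k)/2+1$ by exhibiting an explicit fold, and $J_{\mathrm{hex}}(w_k)\le Z(w_k)/2+1$ by the counting bound already available. For the upper bound, first I would count the zeros in $w_k = 01^4 011 (01)^k 11 (10)^k$: the $(01)^k$ block contributes $k$ zeros, the $(10)^k$ block contributes $k$ more, and the prefix $01^4011$ contributes $2$, so $Z(w_k) = 2k+2$ and the target score is $Z(w_k)/2+1 = k+2$. Since $w_k$ begins with a zero (and may be arranged so only that one boundary zero is exposed), the bound $J_{\mathrm{hex}}(w_k)\le Z(w_k)/2+1$ is immediate from the theorem ``$J_{\mathrm{hex}}(w)\le Z(w)/2+1$'' proved just above; so all the work is in the lower bound.

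For the lower bound I would generalize the picture in \eqref{platypus}, which already displays a fold of a length-16 word (the $k=3$-ish analogue, with $Z=10$) achieving score $6$. The idea is that the $(01)^k$ and $(10)^k$ tails wrap around a central strip of zeros in the hexagonal lattice so that each of the $2k$ zeros in the two tails sits between two $1$'s of the opposite tail (or of the $11$ connector), earning half a point on each side, i.e., one point per aligned pair of rows; together with the contribution of the short prefix $01^4 011$, which closes off one end and supplies the ``$+1$'', this yields exactly $k+2$ points. Concretely I would (i) describe the fold as a move sequence in the hexagonal lattice analogous to \eqref{platypus}, with a repeating unit of moves for each increment of $k$; (ii) verify self-avoidance, which in the hexagonal lattice is automatic for the zig-zag tails provided the two tails are offset by one row; and (iii) count the $H$--$H$ contacts: $2k$ contacts along the doubled strip plus $4$ more from the folded prefix, giving $2k+4$ half-point contributions, hence $k+2$ points.

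The main obstacle will be step (ii)--(iii) bookkeeping: making the move sequence precise enough that self-avoidance is manifest and the contact count is exactly $2k+4$ and not off by one at the seams where the prefix meets the strip or where $(01)^k$ meets the $11$ meets $(10)^k$. I expect the cleanest route is to fix the $k=1$ base case by an explicit diagram (a small extension of \eqref{platypus}), then argue that the map $w_k \mapsto w_{k+1}$ corresponds to inserting one more hexagon-width into the doubled strip, which adds exactly two zeros and exactly one new point while preserving self-avoidance; an induction on $k$ then finishes, with the upper bound from the preceding theorem pinning the value down to equality.
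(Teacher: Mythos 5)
Your approach is the same as the paper's: the upper bound is the theorem $J_{\mathrm{hex}}(w)\le Z(w)/2+1$ proved just above, and the lower bound comes from generalizing the fold in \eqref{platypus}, which is in fact exactly the $k=4$ instance of $w_k$ (a word of length $26$ with $Z=10$, not length $16$). The one thing to correct before carrying out your step (iii): a zero adjacent to $1$'s earns nothing, so the picture of each tail zero ``sitting between two $1$'s of the opposite tail, earning half a point on each side'' is not where the score comes from. In the platypus-type fold the zeros of $(01)^k$ and of $(10)^k$ occupy the two rows of a doubled horizontal strip, and the $k$ scored contacts in the strip are the vertical $0$--$0$ lattice edges joining those two rows; the remaining $2$ contacts both involve the initial zero of the prefix $01^4011$ (one with the prefix's other zero, one with the final zero of $(10)^k$), for $k+2$ contacts in total, which agrees with your final count of $2k+4$ half-contributions. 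With that description repaired, your induction scheme---insert one hexagon-width into the strip, adding two zeros and one vertical contact while preserving self-avoidance---is a sound way to make rigorous what the paper dispatches by pointing at the figure.
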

\begin{proof}
	Using the fold in~\eqref{platypus} we obtain $J_{\mathrm{hex}}(w)=Z(w)/2+1$, which is the best possible by \Cref{hex}.
\end{proof}

\begin{corollary}\label{dec31cor}
	There are infinitely many counterexamples to the \emph{cul-de-sac} conjecture for the hexagonal lattice.
\end{corollary}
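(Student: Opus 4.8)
The plan is to mimic the argument of Corollary \ref{culdesacfalse} verbatim, but with the hexagonal bounds in place of the rectangular ones. The ingredients are already available: Theorem \ref{nov9-2024} supplies infinitely many words $w_k$ (one for each $k\ge 1$) with $J_{\mathrm{hex}}(w_k)=Z(w_k)/2+1$, and Theorem \ref{hex} supplies the matching upper bound $J_{\mathrm{hex}}(1w1)\le Z(w)/2$ whenever a word is padded on both sides by a $1$.

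First I would note that the appropriate hexagonal analogue of the \emph{cul-de-sac} conjecture — that $J_{\mathrm{hex}}(x)\le J_{\mathrm{hex}}(xy)$ for all binary words $x,y$ — would, by the same reversal and composition argument that produced \eqref{referee-last}, imply $J_{\mathrm{hex}}(x)\le J_{\mathrm{hex}}(1x1)$ for every $x$. Then, taking $x=w_k$ for any $k\ge 1$, I would chain the three (in)equalities exactly as in the proof of Corollary \ref{culdesacfalse}:
\begin{align*}
	Z(w_k)/2+1 && = && J_{\mathrm{hex}}(w_k) && \text{by \Cref{nov9-2024},}\\
	           &&\le&& J_{\mathrm{hex}}(1w_k1) && \text{by the (hexagonal) \emph{cul-de-sac} conjecture,}\\
	           &&\le&& Z(w_k)/2 && \text{by \Cref{hex},}
\end{align*}
which is an arithmetic contradiction. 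Hence for each $k\ge 1$ the pair $(w_k,\varepsilon)$ — or more honestly the witnessed failure $J_{\mathrm{hex}}(w_k)>J_{\mathrm{hex}}(1w_k1)$ — refutes weak monotonicity, and since the $w_k$ have unbounded length (the number of zeros $Z(w_k)$ grows with $k$, as $w_k=01^4011(01)^k11(10)^k$ contributes a zero in each $(01)$ block and each $(10)$ block), these counterexamples are genuinely distinct, giving infinitely many.

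The only point needing a sentence of care is that the counterexamples are really infinitely many and not a single word repeated: this is immediate from $Z(w_k)\to\infty$, so the words $w_k$ are pairwise distinct. I do not anticipate a genuine obstacle here — the corollary is a direct transcription of the rectangular case, with Theorem \ref{hex} and Theorem \ref{nov9-2024} playing the roles that Theorem \ref{agarwala} and Theorem \ref{key} played before. If one wanted the statement to literally match Conjecture \ref{conj}'s form $J(x)\le J(xy)$ rather than the two-sided padding, one would phrase the hexagonal conjecture with a one-sided suffix and invoke the reversal identity $J_{\mathrm{hex}}(x)=J_{\mathrm{hex}}(x^R)$ to get both-sided padding; this is the same bookkeeping already done in \eqref{referee-last}.
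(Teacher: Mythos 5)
Your proposal is correct and follows essentially the same route as the paper, which simply states that \Cref{dec31cor} is proved by repeating the steps of \Cref{culdesacfalse} with \Cref{nov9-2024} and \Cref{hex} in place of \Cref{key} and \Cref{agarwala}. Your added observation that the words $w_k$ are pairwise distinct because $Z(w_k)\to\infty$ is a harmless (and welcome) explicit justification of the ``infinitely many'' claim that the paper leaves implicit.
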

The proof of \Cref{dec31cor} has the same steps as the proof of \Cref{culdesacfalse}.

\begin{theorem}\label{nov10-24}
Let $x$ be a binary word and $J\in\{J_{\mathrm{rect}}, J_{\mathrm{tri}}, J_{\mathrm{hex}}\}$. Then $J(x1)\le J(x)$ and $J(1x)\le J(x)$.
\end{theorem}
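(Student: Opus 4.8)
The plan is to exploit the asymmetry between the two letters: a point is scored only by a pair of non-consecutive \emph{hydrophobic} monomers (zeros), so a polar monomer (a one) never contributes to the score. In particular the terminal one of $x1$ is dead weight that can be deleted for free, and the same for the initial one of $1x$.

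First I would reduce the prepending case to the appending case. Just as for $J_{\mathrm{rect}}$, reversing the move sequence of a fold of a word $w$ yields a fold of $w^R$ occupying the same lattice sites with the same set of contacts, so $J(w)=J(w^R)$ for each of $J_{\mathrm{rect}},J_{\mathrm{tri}},J_{\mathrm{hex}}$; hence $J(1x)=J((1x)^R)=J(x^R1)$ and $J(x)=J(x^R)$, and it suffices to prove $J(x1)\le J(x)$.

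For the appending case, let $n=\abs{x}$ and take an optimal fold of $x1$, viewed as a self-avoiding walk $v_1,\dots,v_{n+1}$ in the relevant lattice with $v_i$ labeled by the $i$-th letter of $x1$, so that $v_1,\dots,v_n$ carry the letters of $x$ while $v_{n+1}$ is labeled $1$. Restrict this walk to its first $n$ vertices $v_1,\dots,v_n$. An initial segment of a self-avoiding walk is again a self-avoiding walk, so this restriction is a legitimate fold of $x$. Every point scored by the original fold comes from an unordered pair $\{v_i,v_j\}$ with $\abs{i-j}\ge 2$, with $v_i$ and $v_j$ adjacent in the lattice, and with both $v_i$ and $v_j$ labeled $0$; since $v_{n+1}$ is labeled $1$, no such pair can involve $v_{n+1}$. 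Hence all scoring pairs of the original fold survive in the restricted fold, whose score is therefore at least $J(x1)$. Thus $J(x)\ge J(x1)$, as desired.

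There is essentially no hard step here; the only point requiring care is that the deleted letter is a $1$, not a $0$. Deleting a $0$ can destroy contacts — and indeed, the rest of \Cref{sec2} shows that $J$ is \emph{not} monotone under appending a $0$ — so the argument genuinely uses the hydrophobic/polar distinction. I would also remark that the same proof applies verbatim to $J_{\mathrm{cube}}$, and more generally to the optimal-score function on any lattice graph.
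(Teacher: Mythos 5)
Your proof is correct and is essentially the paper's own argument: delete the terminal (or initial) $1$ from an optimal fold, and observe that since a $1$ never participates in a scored contact, no points are lost. The reduction of the prepending case to the appending case via word reversal is a minor cosmetic difference from the paper, which simply removes the initial $1$ directly.
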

\begin{proof}
	If $F$ is a fold achieving maximum score for $x1$ then we obtain a fold of $x$ with the same score simply by removing the final (respectively, initial) 1 from the fold.
\end{proof}

\Cref{nov10-24-II} provides one way of going from one counterexample to infinitely many, in the case of the \emph{cul-de-sac} conjecture.
In general, such an improvement is of interest because it necessarily requires insight beyond brute force computation.
\begin{theorem}\label{nov10-24-II}
	For $J\in\{J_{\mathrm{rect}}, J_{\mathrm{tri}}, J_{\mathrm{hex}}\}$, if $J(x1)<J(x)$ is witnessed by a fold $F$ of $x$ which can be extended to a fold of $1^m x$ for some $m\in\mathbb N$
	then $J(1^m x1) < J(1^m x)$ as well.
\end{theorem}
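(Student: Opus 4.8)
The plan is to turn the single-counterexample hypothesis into an infinite family by padding on the left with $1$'s, exploiting the monotone behavior of $J$ under appending and prepending a single $1$ (\Cref{nov10-24}) together with the extendability hypothesis. The key inequality to establish is $J(1^m x1) < J(1^m x)$, and the natural way to get this is to sandwich both sides between quantities we can control: on one hand $J(1^m x)$ should be large (at least $J(x)$, in fact we will pin it down via the extendable fold $F$), and on the other hand $J(1^m x1)$ should be small (at most $J(x1)$, again via \Cref{nov10-24} applied enough times).

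First I would observe that by $m$ applications of \Cref{nov10-24} (the $J(1x)\le J(x)$ half) we get $J(1^m x1)\le J(1^{m-1}x1)\le\cdots\le J(x1)$. Next I would use the hypothesis that the score-witnessing fold $F$ of $x$ extends to a fold $F'$ of $1^m x$; since extending a fold by prepending vertices can only add edges, not remove the ones already scored, $F'$ scores at least as many points as $F$, so $J(1^m x)\ge \mathrm{score}(F') \ge \mathrm{score}(F)=J(x)$. Chaining these: $J(1^m x1)\le J(x1) < J(x)\le J(1^m x)$, which is exactly the desired strict inequality. The middle strict inequality is precisely the hypothesis $J(x1)<J(x)$.

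The step I expect to require the most care is the claim that prepending the $1$'s to the fold $F$ does not destroy any of the points already earned by $F$ — i.e.\ that $\mathrm{score}(F')\ge\mathrm{score}(F)$. This is intuitively clear because the new vertices occupy previously-empty lattice sites (they form a self-avoiding walk disjoint from $F$, by the definition of "extends to a fold"), so every $H$–$H$ contact counted in $F$ is still present in $F'$; the new $1$'s are $P$'s and can only create additional contacts or none, never cancel existing ones. One should double-check that the hypothesis genuinely means $F'$ restricted to the last $|x|$ vertices equals $F$ as an embedding, not merely that $F$ and $F'$ have the same score; under the standard reading of "a fold of $x$ which can be extended to a fold of $1^m x$" this is automatic, and the argument goes through verbatim for all three lattices $J\in\{J_{\mathrm{rect}},J_{\mathrm{tri}},J_{\mathrm{hex}}\}$ since \Cref{nov10-24} already covers exactly those.
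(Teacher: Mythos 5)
Your proof is correct and follows essentially the same route as the paper's: the paper's entire argument is the chain $J(1^m x1) \le J(x1) < J(x) = J(1^m x)$ obtained from \Cref{nov10-24} and the extendability hypothesis, which is exactly your sandwich (you write $\le$ where the paper writes $=$ in the last step, but only the $\ge$ direction is needed and your justification of it via the extended fold is the intended one). Your extra care about why the extension cannot destroy existing contacts is a reasonable elaboration of a step the paper leaves implicit.
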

\begin{proof}
	Using \Cref{nov10-24} and our assumptions we have
	\[
		J(1^m x1) \le J(x1) < J(x) = J(1^m x).\qedhere
	\]
\end{proof}

\Cref{daisy} records the fact that if $n = 1 + 3r (r + 1)$ then $D_n=B_r$ (the closed ball with radius $r$, also known as the hexagonal daisy $D_n$) is a solution to the \emph{edge-isoperimetric problem} for cardinality $n$.
This means that the minimum number of edges connect vertices in the set $B_r$ to vertices outside the set, among all sets of cardinality $n$.

\begin{theorem}[{\cite[Corollary 7.1]{MR2035509}; see also~\cite{MR3614767}}]\label{daisy}
	For $n = 1 + 3r + 3r^2$, the hexagonal daisy $D_n$ is the unique minimizer of the edge-isoperimetric problem for the triangular lattice.
\end{theorem}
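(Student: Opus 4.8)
The plan is to reduce the edge-isoperimetric problem on the triangular lattice to a one-dimensional packing question by slicing $S$ along the three families of lattice lines. Write $\partial S$ for the set of edges with exactly one endpoint in $S$, and $e(S)$ for the number of edges with both endpoints in $S$. Since the lattice is $6$-regular, $|\partial S| = 6|S| - 2\,e(S)$, so minimizing $|\partial S|$ is equivalent to maximizing $e(S)$. Partition the edges into three parallel classes according to direction. Along any line $\ell$ in direction $i$, the trace $S \cap \ell$ splits into some number $c_\ell$ of maximal runs, contributing $|S\cap\ell| - c_\ell$ to the count of direction-$i$ edges inside $S$ and $2c_\ell$ to the count of direction-$i$ edges in $\partial S$. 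Summing over all lines and all three directions, and writing $L_i$ for the total number of maximal runs of $S$ in direction $i$, gives $e(S) = 3|S| - (L_1+L_2+L_3)$ and hence $|\partial S| = 2(L_1+L_2+L_3)$. Since each line meeting $S$ carries at least one run, $L_i \ge w_i$, where $w_i$ is the number of direction-$i$ lines meeting $S$; therefore $|\partial S| \ge 2(w_1+w_2+w_3)$, and equality forces $S$ to be a single run along every lattice line.

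The second step bounds $|S|$ in terms of $w_1,w_2,w_3$. Choosing coordinates so the three line families are $y=\mathrm{const}$, $x=\mathrm{const}$, $x+y=\mathrm{const}$, the set $S$ lies inside the lattice hexagon $T = \{(x,y): y\in P_1,\ x\in P_2,\ x+y\in P_3\}$, where $P_i$ collects the $w_i$ relevant line positions. Since $|T| = \sum_{t,s} \mathbf{1}_{P_2}(t)\,\mathbf{1}_{P_1}(s-t)\,\mathbf{1}_{P_3}(s)$ is a trilinear convolution form, the discrete Riesz rearrangement inequality gives $|S| \le |T| \le N(w_1,w_2,w_3)$, where $N(a,b,c)$ is the number of lattice points in the largest hexagon with widths $a,b,c$ — attained when the $P_i$ are centered intervals. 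A direct count of the trapezoidal profile obtained by projecting an $a\times b$ grid in the third direction gives $N(a,b,c) \le ac - \tfrac14\big((a+c-b)^2 - 1\big)$ in the non-degenerate range (with $N$ strictly below $1+3r+3r^2$ in the degenerate range, by a crude product estimate). Substituting $b = (a+b+c) - a - c$, bounding $4ac \le (a+c)^2$, and using $a+b+c \le 6r+3$, the target inequality $N(a,b,c) \le 1+3r+3r^2$ collapses to $\big((2r+1)-(a+c-b)\big)^2 \ge 0$; tracing the equality conditions back forces $a=b=c=2r+1$, where indeed $N = 1+3r+3r^2$.

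Combining the two steps: if $|S| = 1+3r+3r^2$, then $N(w_1,w_2,w_3) \ge |S|$ forces $w_1+w_2+w_3 \ge 6r+3$, so $|\partial S| \ge 2(6r+3) = 12r+6 = |\partial D_n|$. If equality holds, then all the intermediate inequalities are tight: $w_1 = w_2 = w_3 = 2r+1$, the chain $|S| \le |T| \le N(2r+1,2r+1,2r+1)$ is attained throughout, and so $S = T$ is the unique maximal hexagon of those widths, which is precisely the hexagonal daisy $B_r = D_n$ (up to a lattice automorphism).

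The main obstacle is the passage from an arbitrary $S$ to the aligned hexagon $T$, namely the rearrangement inequality and — more delicately — its equality case, since one must preclude a spread-out or non-convex $S$ from matching the boundary of $D_n$; in the near-regular regime this can be pinned down by hand because the relevant trapezoidal profile is strictly unimodal. (Alternatively one can replace this step by a compression argument that pushes the runs of $S$ together one direction at a time; the known difficulty there is that a compression in one of the three directions may create new runs in another, so the order and interaction of the compressions must be controlled carefully.) Everything else — the $6$-regularity identity, the direction decomposition, and the counting lemma with its AM--GM collapse — is routine once this step is secured.
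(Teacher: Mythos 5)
First, note that the paper does not prove \Cref{daisy} at all: it is imported from Harper (\cite[Corollary 7.1]{MR2035509}), so there is no in-paper argument to compare against, and your proposal has to be judged as a self-contained proof. Its skeleton is the standard projection/compression route, and the quantitative part checks out: the identity $|\partial S|=2(L_1+L_2+L_3)$, the bound $L_i\ge w_i$, the count $N(2r+1,2r+1,2r+1)=3r^2+3r+1$, and the collapse of $N(a,b,c)\le 1+3r+3r^2$ under $a+b+c\le 6r+3$ to $\bigl((2r+1)-(a+c-b)\bigr)^2\ge 0$ are all correct (and with $a+b+c\le 6r+2$ the relevant discriminant goes negative, so the strict implication $N<1+3r+3r^2$ that you need also holds). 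So the inequality half, $|\partial S|\ge 12r+6$, is essentially complete modulo citing the discrete Riesz rearrangement inequality and patching the ``degenerate range'' of your formula for $N$, which you dismiss with an unspecified ``crude product estimate.''

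The genuine gap is in the uniqueness half, which is exactly the part of \Cref{daisy} the paper leans on downstream (in \Cref{iso_tri} it is uniqueness that forces the zeros into a metric ball). You reduce uniqueness to the equality case of the discrete Riesz rearrangement inequality and assert that the equality conditions can be ``traced back.'' They cannot, at least not for free: over $\mathbb Z$ the trilinear form $\sum 1_{P_2}(t)\,1_{P_1}(s-t)\,1_{P_3}(s)$ attains its rearranged value for non-interval configurations (for instance all three $P_i$ arithmetic progressions with a common difference $d>1$, suitably sized), so equality in $|T|\le N(w_1,w_2,w_3)$ does not by itself force the $P_i$ to be intervals. Ruling these out requires feeding back the other tight conditions ($L_i=w_i$, so $S$ meets every line in a single run, together with $S=T$), and that interaction is precisely the step you yourself flag as ``the main obstacle'' and leave as a sketch; the alternative compression argument you mention carries the complication, which you also name but do not resolve, that compressing in one direction can create new runs in another. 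As written, the proposal establishes the isoperimetric inequality but not the uniqueness assertion of the theorem.
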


Given words $\eta, \theta$, we define $I_{\eta}(\theta)=\mathrm{occ}(\theta, \eta)$ to be the number of occurrences of $\eta$ in $\theta$.
For example, $I_{00}(001000)=3$.
The statistic $I_{00}(\theta)$ represents the number of internal ``hydrophobic'' connections of $\theta$.
\begin{theorem}\label{iso_tri}
There are infinitely many counterexamples to the \emph{cul-de-sac} conjecture for the triangular lattice.
\end{theorem}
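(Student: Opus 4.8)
The plan is to follow the isoperimetric refutation already used for the rectangular lattice (\Cref{iso-rect}), replacing the rectangular edge-isoperimetric shape by the hexagonal daisy of \Cref{daisy}. Concretely, I would exhibit for every $r\ge 2$ a word $x_r$ that begins and ends with $0$, has $Z=3r^2+3r+1$ zeros arranged in $B=6r+4$ blocks, and admits a fold whose hydrophobic sites occupy exactly a daisy $B_r$; then I would show that $1x_r1$ cannot match this, so that $J_{\mathrm{tri}}(x_r)>J_{\mathrm{tri}}(1x_r1)$, contradicting the triangular-lattice cul-de-sac statement $J_{\mathrm{tri}}(x)\le J_{\mathrm{tri}}(1x1)$ (the analogue of \eqref{referee-last}) for infinitely many words.

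The first ingredient is a generic upper bound. For any word $u$ and any fold $F$, let $S$ be the set of occupied sites carrying a zero; since consecutive monomers of a self-avoiding walk are lattice-adjacent, the score of $F$ equals $e(S)-I_{00}(u)$, where $e(S)$ is the number of induced lattice edges inside $S$ and the $I_{00}(u)$ subtracted edges are exactly the chain-adjacent pairs of zeros. As the triangular lattice is $6$-regular, $2e(S)+\partial S=6|S|$, so maximizing $e(S)$ is the same as minimizing the edge boundary $\partial S$; by \Cref{daisy}, for $Z=1+3r+3r^2$ this is minimized \emph{uniquely} by the daisy $B_r$, with $\partial B_r=12r+6=2B-2$. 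Writing $e^{\ast}(Z)$ for the corresponding maximum of $e(S)$, we get $J_{\mathrm{tri}}(u)\le e^{\ast}(Z(u))-I_{00}(u)$, with equality only if some fold places all zeros on a daisy.

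The second ingredient is the construction. For the fold of $x_r$ I would run a self-avoiding walk that spirals through the rings of $B_r$: after each short hydrophobic run it steps out across the next unused boundary edge, makes a short polar detour in the exterior of $B_r$, and steps back in across the following boundary edge. Because $x_r$ starts and ends with $0$, only $2B-2$ such crossings occur, and the design is arranged so that they use all $\partial B_r=2B-2$ boundary edges exactly once; the word $x_r$ is read off from the template, and $e(S)=e^{\ast}(Z)$ for this fold, so combining with the upper bound gives $J_{\mathrm{tri}}(x_r)=e^{\ast}(Z)-(Z-B)$. The third ingredient is the obstruction for $1x_r1$: this word has the same $Z$, $B$, and $I_{00}$, but begins and ends with $1$, so in any fold every one of the $B$ hydrophobic blocks is both entered and exited, and since a self-avoiding walk traverses each edge at most once this forces $2B=12r+8$ distinct boundary edges of the zero-set $S$ to be used. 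A daisy has only $12r+6<2B$ boundary edges, so $S$ is not a daisy; by the uniqueness in \Cref{daisy}, $e(S)\le e^{\ast}(Z)-1$ for every fold, whence $J_{\mathrm{tri}}(1x_r1)\le e^{\ast}(Z)-1-(Z-B)<J_{\mathrm{tri}}(x_r)$. Letting $r$ range over $\{2,3,4,\dots\}$ produces infinitely many counterexamples. (Alternatively one could isolate a single $x_r$ and invoke \Cref{nov10-24-II}, but the daisy-tiling fold is spatially tight and need not extend by a prefix of $1$'s, so I would keep the explicit family.)

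I expect the construction in the third paragraph to be the real obstacle: tiling $B_r$ with the zeros while spending \emph{every} boundary edge on a chain crossing leaves no slack in the boundary budget, so the exterior polar detours must be threaded around $B_r$ and past each other without collisions and without skipping a boundary edge. This is the triangular-lattice analogue of the explicit move word in \Cref{key}, but under a rigid boundary constraint; verifying that such a fold exists for all $r\ge 2$ (where $B\le Z$ holds, failing only at $r=1$, which is the sole arithmetic obstruction) is where the work concentrates, while the rest is bookkeeping with \Cref{daisy}.
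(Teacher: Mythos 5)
Your overall framework---the identity $f(F,u)=e(S)-I_{00}(u)$ for the zero-set $S$ of a fold, the handshake reduction of maximizing $e(S)$ to the edge-isoperimetric problem, and the uniqueness statement in \Cref{daisy}---is exactly the one the paper uses. But your obstruction for $1x_r1$ and the matching construction are different, and the construction cannot exist. You require the fold of $x_r$ to spend \emph{every} one of the $12r+6$ boundary edges of the daisy on a chain crossing. A self-avoiding walk uses at most two incident edges at any vertex, while each of the six corner vertices of the outer ring of $B_r$ is incident to \emph{three} boundary edges; hence at least $6$ of the boundary edges can never be walk edges, and at most $12r$ crossings are available (in fact fewer, since the walk must also penetrate to the interior of the daisy through ring-$r$-to-ring-$(r-1)$ edges, which consume additional walk edges at outer-ring vertices). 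Your construction needs $2B-2=12r+6$ crossings and your obstruction for $1x_r1$ needs $2B>12r+6$; both sit on the wrong side of the true bound $\le 12r$, and no choice of $B$ satisfies both, so the block-counting obstruction cannot be repaired by adjusting parameters.

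The paper replaces your global boundary-edge budget with a local depth obstruction: if a fold of $1v1$ places all zeros on the ball $B_n$, then the subwalk travelling from the exterior to the center and back out passes through $n$ consecutive ball vertices on the way in and $n$ on the way out, forcing a factor $0^{2n+1}$ in $v$. The constructed words (daisies with ``whiskers,'' i.e., a few boundary edges $0\to 0$ replaced by detours $0\to 1^*\to 0$) then only need to avoid the factor $0^{2n+1}$, which costs a handful of crossings rather than all of them, so explicit folds exist (\Cref{demonstrate}, \Cref{demonstrate5}). To salvage your argument, keep your first two paragraphs and replace the third with this reach-the-center obstruction and the correspondingly weaker requirement on the word.
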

\begin{proof}
	We shall exhibit for each $n\in\mathbb N$ a pair $(x,w)$ of a fold $x$ and a word $w$ such that
	\begin{itemize}
		\item $w$ contains no subword $0^{2n+1}$, and
		\item for each word $v$ with $Z(v)=Z(w)$,
	if $I_{00}(1v1)+f(y,1v1) \ge I_{00}(w)+f(x,w)$
	then $v$ contains a subword $0^{2n+1}$.
	\end{itemize}
	(We apply this with $v=w$.)
	The fold $x$ will be such that the zeros of $w$ are organized in a metric ball, which forces $y$ to do the same for $1v1$.
	
	Indeed, by \Cref{daisy}, for any words $\eta,\theta$ with $Z(\eta)=Z(\theta)$ and where $I_{00}(\zeta)\ge I_{00}(\eta)$,
	and any folds $x,y$, if $x$ has the zeros of $\eta$ in a metric ball then $f(x,\eta)\ge f(y,\zeta)$.

	This is because in general, if the zeros of a word $\zeta$ are organized in a metric ball by a fold $x$ then
	\[
	I_{00}(\zeta)+f(x,\zeta) = \max_{\eta : Z(\eta)=Z(\zeta)} I_{00}(\eta)+f(y,\eta).
	\]
\begin{figure}[t!]
	\centering
	    \begin{subfigure}[t]{0.5\textwidth}
	        \centering
\[
			\xymatrix@R=0.3em@C=0.3em{
	&&&1\ar@{-}[dr]&&&\\
			1\ar@{..>}[dr]\ar@{-}[rr]&&0\ar@{-}[ur]&&0\ar@{-}[rr]&&1\ar@{-}[dl]\\
			&0_b\ar@{-}[rr]&&0_a&&0\ar@{-}[dr]&\\
			1\ar@{-}[ur]&&0\ar@{-}[ll]&&0\ar@{-}[dl]&&1\ar@{-}[ll]\\
			&&&1\ar@{-}[ul]&&&\\
	}
\]	        \caption{Pair $(x,\zeta)$ with $I_{00}(\zeta)=1$,$f(x,\zeta))=11$, $\zeta=0(01)^6$.}
	    \end{subfigure}%
	    ~ 
	    \begin{subfigure}[t]{0.5\textwidth}
	        \centering
\[
			\xymatrix@R=0.3em@C=0.3em{
			\\
			&&0\ar@{-}[rr]&&0&&\\
			&0_b\ar@{-}[rr]\ar@{-}[dr]&&0_a&&0\ar@{-}[ul]&\\
			&&0&&0\ar@{-}[ll]\ar@{-}[ur]&&\\
			\\
}
\]
	        \caption{Pair $(y,\eta)$ with $I_{00}(\eta)=6$, $f(y,\eta)=6$, $\eta=0^7$.}
	    \end{subfigure}
\caption{Illustration of the Induced Edge Problem solution for the triangular lattice.}\label{whiskers}
\end{figure}
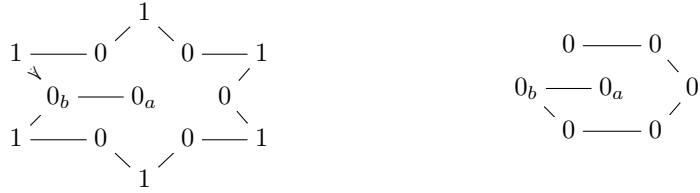

	For example, the pairs $(x,\zeta), (y,\eta)$ in \Cref{whiskers} both realize the maximum above.\footnote{This sum of $I_{00}$ and $f$
	is discussed in the second paragraph of Section 2 by Berger and Leighton, and
	on page 3 by Harper where he phrases it as: \emph{the Edge-Isoperimetric Problem and the Induced Edge Problem are equivalent and we shall treat them as interchangeable}.}
	
	Therefore, in any fold $y$ of $1v1$ with $f(y,1v1)=J(1v1)$, letting $y_0$ be the induced fold of $v$,
	we have $f(y_0,v) \ge f(y,1v1)\ge f(x,w)$ and so the zeros of $v$ must also be organized in a metric ball in $y_0$.
	Moreover, in order for $1v1$ to achieve enough points it must be that $Z(v)=Z(w)$.
	Write $v=u0w$ where the 0 between $u$ and $w$ is located in the center of the metric ball according to the fold $y$.
	Then the path from the initial 1 of $1v1$ to the final 1 of $1v1$ via the center 0 of the ball must contain a factor that enters the metric ball,
	proceeds to its center, and exits the metric ball.
	Thus, we must have $u=u' 0^n$ and $w = 0^n w'$ for some words $u',w'$.
	We conclude that $v$ contains a factor of the form $0^{2n+1}$.
	
	We create some folds of $0^*$ in the shape of a metric ball, and add \emph{whiskers}, replacing some edges $0\to 0$ on the boundary by $0\to 1^*\to 0$,
	in order to prevent long sequences of 0s. For example, the fold on the left in~\eqref{whiskers} has whiskers.

	We start from a symmetric closed-loop configuration and break its symmetry in order to connect the central position labeled $0_a$ and an adjacent position labeled $0_b$.
	See~\Cref{demonstrate}, \Cref{demonstrate5} for the pairs $P_n=(x_n,w_n)$, $n\le 5$.
\end{proof}
\begin{figure}
\[
	\begin{gathered}
		\xymatrix@R=0.3em@C=0.3em{
w_1= (001)^30&x_1:&&&&&&&&&1\ar@{-}[dr]&&&\\
&&&&&&&		&&0\ar@{-}[ur]&&0\ar@{-}[dr]&&\\
&&&&&&&		&0_b\ar@{-}[rr]&&0_a&&0\ar@{-}[dr]&\\
&&&&&&&		1\ar@{-}[ur]&&0\ar@{-}[ll]&&0\ar@{-}[ll]&&1\ar@{-}[ll]\\
w_2= (0001)^60 &x_2:&&&&						&					&				&				&				&				&	1\ar@{-}[dl]\ar@{-}[dr]			&	&	&	\\
&&&&&						&	1\ar@{-}[rr]	&				&	0\ar@{-}[dr]&							&		0\ar@{-}[dl]		&				&	0\ar@{-}[dl]			&		&	 \\
&&&&&						&					&	0\ar@{-}[ul]&				&	0			&				&	0\ar@{-}[rr]&			&	0	&	& 1\ar@{-}[dl]\ar@{-}[ll]\\
&&&&&						&	0\ar@{-}[rr]	&				&	0_b\ar@{-}[rr]&				&	0_a			&				&	0						&		&	0\ar@{-}[ll] \\ %
&&&&&1\ar@{-}[ur]\ar@{-}[rr]&					&	0\ar@{-}[rr]&				&	0						&				&	0			&							&	0\ar@{-}[ul]	&		&\\ %
&&&&&						&					&				&	0\ar@{-}[ur]&							&	0\ar@{-}[ur]&				&	0\ar@{-}[ul]\ar@{-}[rr]	&		&	1\ar@{-}[ul]\\ %
&&&&&						&					&				&				&	1\ar@{-}[ul]\ar@{-}[ur]	&				&				&							&		&	\\
w_3= 0^51(0^61)^500 &x_3:&&		&			&			&			&			&	&			&			&			&	1\ar@{-}[dl]&			&			&			&\\ 
&&&		&			&	1\ar@{-}[dr]&			&	0\ar@{-}[ll]&			&	0\ar@{-}[ll]&			&	0\ar@{-}[dl]&			&	0\ar@{-}[ul]&			&		&\\ 
&&&		&			&			&	0\ar@{-}[dr]&			&	0\ar@{-}[ur]&			&	0\ar@{-}[dl]&			&	0\ar@{-}[rr]&			&	0\ar@{-}[ul]&		&		&\\
&&&&			&	0\ar@{-}[dl]&			&0&			&	0\ar@{-}[ul]&			&	0\ar@{-}[ur]&			&	0\ar@{-}[ll]		&			&	0\ar@{-}[ll]	&	&	1\ar@{-}[ll]\\ 
&&&		&	0\ar@{-}[dl]&			&	0\ar@{-}[ul]&			&	0_b\ar@{-}[ll]&			&	0_a\ar@{-}[ll]&			&	0\ar@{-}[rr]&			&	0\ar@{-}[dr]&		&	0\ar@{-}[ur]	&			&\\ 
&&&1\ar@{-}[rr]&			&	0\ar@{-}[rr]&			&	0\ar@{-}[rr]&			&	0\ar@{-}[dl]&			&	0\ar@{-}[dr]&			&	0\ar@{-}[ul]&			&	0\ar@{-}[ur]	&		&	\\ 
&&&		&			&			&	0\ar@{-}[dr]&			&	0\ar@{-}[ll]&			&	0\ar@{-}[ur]&			&	0\ar@{-}[dl]&			&	0\ar@{-}[ul]&		&		&\\
&&&		&			&	&			&	0\ar@{-}[dr]&			&	0\ar@{-}[ur]&			&	0\ar@{-}[rr]&			&	0\ar@{-}[rr]&			&	1\ar@{-}[ul]	&\\ 
&&&		&			&			&			&			&	1\ar@{-}[ur]&			&			&			&	&			&			&		&\\
w_4 = 0^510^31(0^710^31)^50^3 &x_4:&			&			&			&			&			&			&			&	1\ar@{-}[dl]&			&			&			&	1\ar@{-}[dl]&	&	&	&	&	\\
&			&			&			&1\ar@{-}[dr]	&			&	0\ar@{-}[ll]&				&	0\ar@{-}[dl]&			&	0\ar@{-}[ul]&			&	0\ar@{-}[dl]&			&	0\ar@{-}[ul]&			&			&	&	\\ 
&			&			&			&			&0\ar@{-}[dr]	&			&0\ar@{-}[ul]		&			&	0\ar@{-}[ur]&			&	0\ar@{-}[dl]&			&	0\ar@{-}[ur]&			&	0\ar@{-}[ll]&		&	1\ar@{-}[ll]&	\\ 
&			&1\ar@{-}[dr]	&			&0\ar@{-}[ll]	&			&	0\ar@{-}[dr]&				&	0\ar@{-}[ur]&			&	0\ar@{-}[dl]&			&	0\ar@{-}[rr]&			&	0\ar@{-}[rr]&			&	0\ar@{-}[ur]&	&	\\ 
&			&			&0\ar@{-}[dr]	&			&0\ar@{-}[ul]	&			&0	&			&	0\ar@{-}[ul]&			&	0\ar@{-}[ur]&			&	0\ar@{-}[ll]&			&	0\ar@{-}[ll]&			&	0\ar@{-}[ll]&	& 1\ar@{-}[ll]\\ 
&			&0\ar@{-}[dl]	&			&0\ar@{-}[ll]	&			&	0\ar@{-}[ul]&				&0_b\ar@{-}[ll]	&			&0_a\ar@{-}[ll]	&			&	0\ar@{-}[rr]&			&	0\ar@{-}[dr]&			&	0\ar@{-}[rr]&	&	0\ar@{-}[ur]\\ 
&1\ar@{-}[rr]	&			&0\ar@{-}[rr]	&			&0\ar@{-}[rr]	&			&0\ar@{-}[rr]		&			&	0\ar@{-}[dl]&			&	0\ar@{-}[dr]&			&	0\ar@{-}[ul]&			&	0\ar@{-}[dr]&			&	0\ar@{-}[ul]&	\\ 
&			&			&			&0\ar@{-}[dl]	&			&	0\ar@{-}[ll]&				&	0\ar@{-}[ll]&			&	0\ar@{-}[ur]&			&	0\ar@{-}[dl]&			&	0\ar@{-}[ul]&			&0\ar@{-}[rr]	&	&	1\ar@{-}[ul]\\ 
&			&			&1\ar@{-}[rr]	&			&0\ar@{-}[rr]	&			&0\ar@{-}[dl]		&			&	0\ar@{-}[ur]&			&	0\ar@{-}[dl]&			&	0\ar@{-}[dr]&			&	0\ar@{-}[ul]&			&	&	\\ 
&			&			&			&			&			&	0\ar@{-}[dr]&				&	0\ar@{-}[ur]&			&	0\ar@{-}[dr]&			&	0\ar@{-}[ur]&			&	0\ar@{-}[rr]&			&1\ar@{-}[ul]	&	&	\\ 
&			&			&			&			&			&			&1\ar@{-}[ur]		&			&			&			&	1\ar@{-}[ur]&			&\\
}
	\end{gathered}
\]
\caption{Folds that demonstrate nonmonotonicity in the triangular model.}\label{demonstrate}
\end{figure}
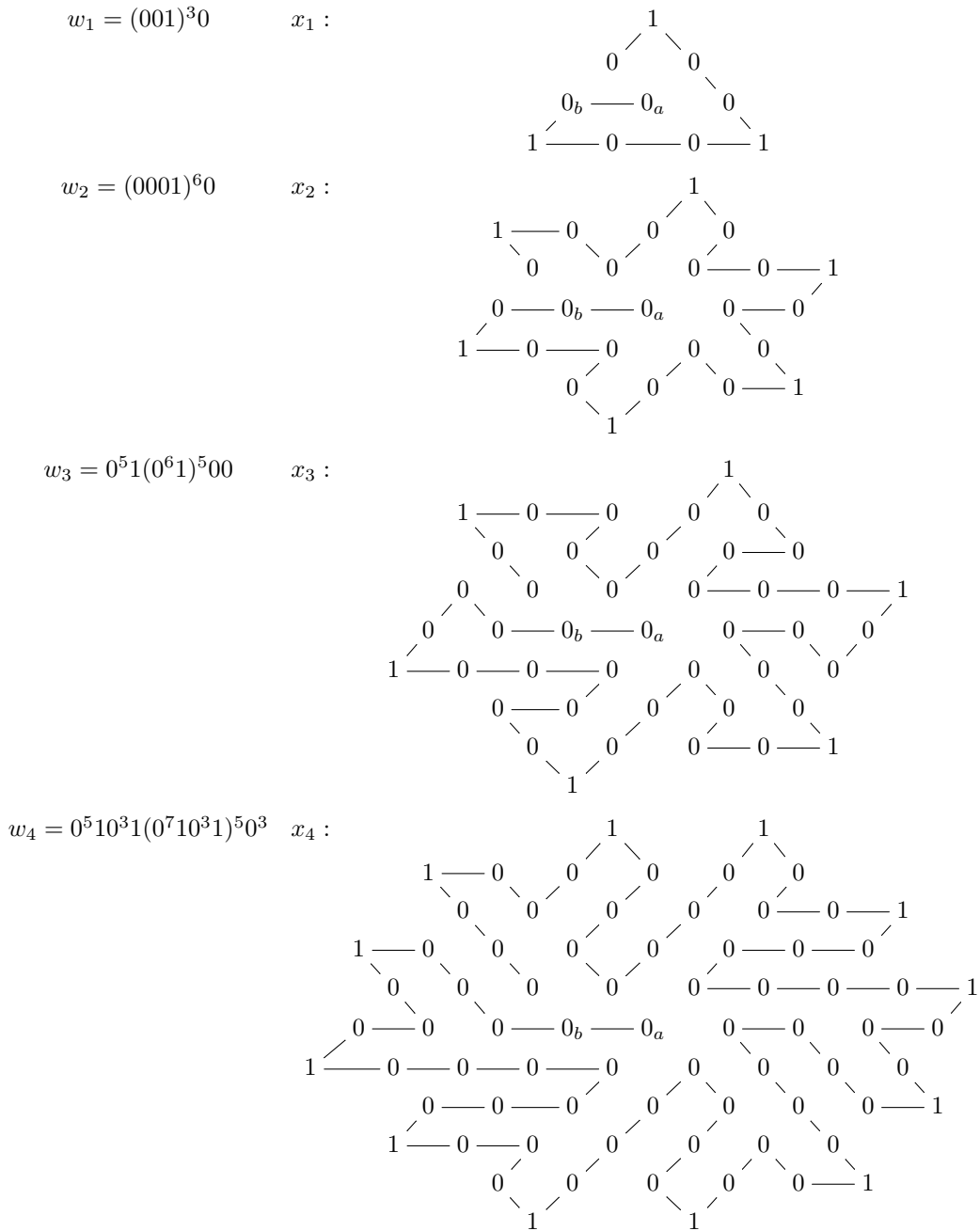

\begin{figure}
\[
\xymatrix@R=0.3em@C=0.3em{
		&			&			&			&			&			&			&	1\ar@{-}[dr]&			&			&			&			&			&	1\ar@{-}[dr]&			&			&			&			&			&			&			&			&		\\
		&			&			&			&			&			&	0\ar@{-}[ur]&			&	0\ar@{-}[rr]&			&	0\ar@{-}[dl]&			&	0\ar@{-}[ur]&			&	0\ar@{-}[dl]&			&	0\ar@{-}[rr]&			&	1\ar@{-}[dl]&			&			&			&		\\
		&			&			&	1\ar@{-}[rr]&			&	0\ar@{-}[dr]&			&	0\ar@{-}[ul]&			&	0\ar@{-}[dr]&			&	0\ar@{-}[ur]&			&	0\ar@{-}[dl]&			&	0\ar@{-}[ur]&			&	0\ar@{-}[dr]&			&			&			&			&		\\
		&			&			&			&	0\ar@{-}[ul]&			&	0\ar@{-}[dr]&			&	0\ar@{-}[ul]&			&	0\ar@{-}[ur]&			&	0\ar@{-}[dl]&			&	0\ar@{-}[ur]&			&	0\ar@{-}[dl]&			&	0\ar@{-}[ll]&			&			&			&		\\
		&			&			&	0\ar@{-}[dr]&			&	0\ar@{-}[ul]&			&	0\ar@{-}[dr]&			&	0\ar@{-}[ul]&			&	0\ar@{-}[dr]&			&	0\ar@{-}[ur]&			&	0\ar@{-}[rr]&			&	0\ar@{-}[rr]&			&	0\ar@{-}[rr]&			&	1\ar@{-}[dl]&		\\
1\ar@{-}[rr]&			&	0\ar@{-}[ur]&			&	0\ar@{-}[rr]&			&	0\ar@{-}[ul]&			&	0\ar@{-}[rr]&			&	0\ar@{-}[ul]&			&	0\ar@{-}[ur]&			&	0\ar@{-}[dl]&			&	0\ar@{-}[ll]&			&	0\ar@{-}[ll]&			&0\ar@{-}[ll]	&			&		\\
		&	0\ar@{-}[ul]&			&	0\ar@{-}[ll]&			&	0\ar@{-}[ll]&			&	0\ar@{-}[ll]&			&	0_b\ar@{-}[ll]&			&	0_a\ar@{-}[ll]&			&	0\ar@{-}[rr]&			&	0\ar@{-}[rr]&			&	0\ar@{-}[rr]&			&	0\ar@{-}[rr]&			&	0\ar@{-}[dr]&		\\
		&			&	0\ar@{-}[rr]&			&	0\ar@{-}[rr]&			&	0\ar@{-}[rr]&			&0&			&	0\ar@{-}[dl]&			&	0\ar@{-}[dr]&			&	0\ar@{-}[ll]&			&	0\ar@{-}[dr]&			&	0\ar@{-}[ll]&			&	0\ar@{-}[dl]&			&1\ar@{-}[ll]\\
		&	1\ar@{-}[ur]&			&	0\ar@{-}[ll]&			&	0\ar@{-}[ll]&			&	0\ar@{-}[ll]&			&	0\ar@{-}[dl]&			&	0\ar@{-}[ul]&			&	0\ar@{-}[dr]&			&	0\ar@{-}[ul]&			&	0\ar@{-}[dr]&			&	0\ar@{-}[ul]&			&			&		\\
		&			&			&			&	0\ar@{-}[rr]&			&	0\ar@{-}[ur]&			&	0\ar@{-}[dl]&			&	0\ar@{-}[ur]&			&	0\ar@{-}[dl]&			&	0\ar@{-}[dr]&			&	0\ar@{-}[ul]&			&	0\ar@{-}[dr]&			&			&			&		\\
		&			&			&			&			&	0\ar@{-}[ul]&			&	0\ar@{-}[dl]&			&	0\ar@{-}[ur]&			&	0\ar@{-}[dl]&			&	0\ar@{-}[ul]&			&	0\ar@{-}[dr]&			&	0\ar@{-}[ul]&			&	1\ar@{-}[ll]&			&			&		\\
		&			&			&			&	1\ar@{-}[ur]&			&	0\ar@{-}[ll]&			&	0\ar@{-}[ur]&			&	0\ar@{-}[dl]&			&	0\ar@{-}[ur]&			&	0\ar@{-}[ll]&			&	0\ar@{-}[dl]&			&			&			&			&			&		\\
		&			&			&			&			&			&			&			&			&	1\ar@{-}[ul]&			&			&			&			&			&	1\ar@{-}[ul]&			&			&			&			&			&			&		\\
}
\]
\caption{
The fold $x_5$ in the triangular lattice of a word $w_5$ having no occurrence of $0^{11}$.
}\label{demonstrate5}
\end{figure}
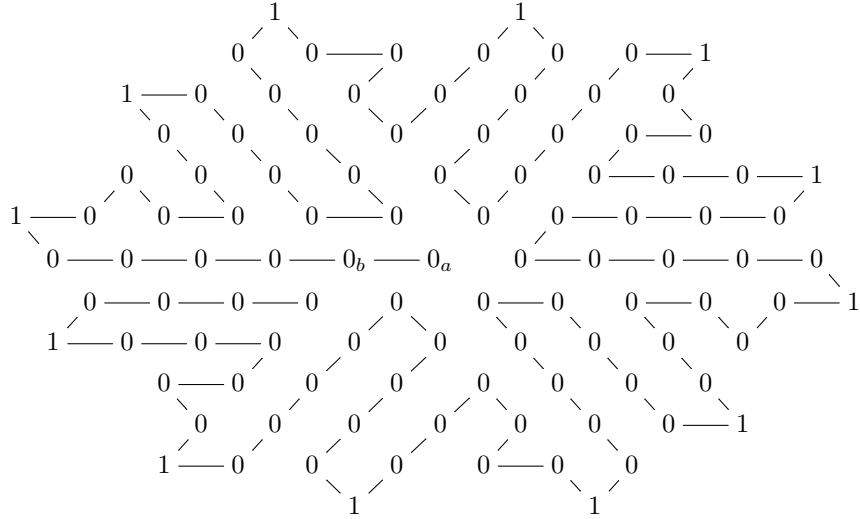

\subsection{2D rectangular lattice}

\begin{remark}
	While a metric ball $B_r$ optimizes the score for triangular folding, that is not true for rectangular folding.
	Compare the ball $B_3$ in~\Cref{nov13-2024} (which is a ball under the $L^1$ metric)
	to the square (or $L^{\infty}$ ball) of the same size $5^2 = 1+4+8+12$.
\end{remark}
\begin{figure}[t!]
	\centering
	    \begin{subfigure}[t]{0.5\textwidth}
	        \centering
\[
\xymatrix@R=1em@C=1em{
	&		&		&	3\ar@{-}[d]	&		&	\\
	&		&	3	&	2	&	3	&	\\
	&	3	&	2	&	1	&	2	&	3\\
3\ar@{-}[r]	&	2	&	1	&	0	&	1	&	2	&	3\ar@{-}[l]\\
	&	3	&	2	&	1	&	2	&	3\\
	&		&	3	&	2	&	3	&	\\
	&		&		&	3\ar@{-}[u]	&	&	\\
}
\]	        \caption{$B_3(L^1)$ has $25$ vertices and $36$ edges.}
	    \end{subfigure}%
	    ~ 
	    \begin{subfigure}[t]{0.5\textwidth}
	        \centering
\[\xymatrix@R=1em@C=1em{
\\
	&	4\ar@{-}[r]\ar@{-}[d]	&	3	&	2	&	3	&	4\ar@{-}[l]\ar@{-}[d]\\
	&	3	&	2	&	1	&	2	&	3\\
	&	2	&	1	&	0	&	1	&	2\\
	&	3	&	2	&	1	&	2	&	3\\
	&	4\ar@{-}[r]\ar@{-}[u]	&	3	&	2	&	3	&	4\ar@{-}[l]\ar@{-}[u]\\
	\\
}
\]
	        \caption{$B_2(L^{\infty})$ has 25 vertices and $40$ edges.}
	    \end{subfigure}
\caption{An optimal configuration (b) in the 2D rectangular lattice that is not a ball (a) in the graph metric.
The edges shown are the edges in the symmetric difference between the two graphs.}\label{nov13-2024}
\end{figure}
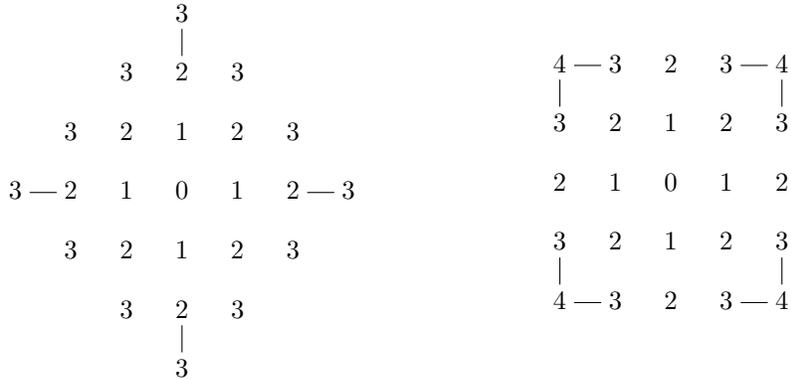

\begin{figure}[t!]
	\centering
	    \begin{subfigure}[t]{0.5\textwidth}
	        \centering
\[
\xymatrix@R=1em@C=1em{
P_2^{\text{rect}}\\
	&	0\ar[d]	&	0\ar[l]	&	0\ar[l]	&	0\ar[d]	&	0\ar[l]\\
	&	0\ar[r]	&	0		&	0\ar[u]	&	0\ar[l]	&	0\ar[u]\\
	&	0\ar[d]	&	0\ar[l]	&	0\ar[l]	&	0\ar[r]	&	0\ar[u]\\
	&	0\ar[d]	&	0\ar[r]	&	0\ar[d]	&	0\ar[u]	&	0\ar[l]\\
	&	0\ar[r]	&	0\ar[u]	&	0\ar[r]	&	0\ar[r]	&	0\ar[u]\\
	\\
}
\]	        \caption{Before whiskers.}
	    \end{subfigure}%
	    ~ 
	    \begin{subfigure}[t]{0.5\textwidth}
	        \centering
\[
\xymatrix@R=1em@C=1em{
	&			&	1\ar[d]		&	1\ar[l]		&	1\ar[d]		&	1\ar[l]		\\
1\ar[d]	&	0\ar[l]	&	0\ar[l]	&	0\ar[u]	&	0\ar[d]	&	0\ar[u]\\
1\ar[r]	&	0\ar[r]	&	0	&	0\ar[u]	&	0\ar[l]	&	0\ar[u]	&	1\ar[l]\\
1\ar[d]	&	0\ar[l]	&	0\ar[l]	&	0\ar[l]	&	0\ar[r]	&	0\ar[r]	&	1\ar[u]\\
1\ar[r]	&	0\ar[d]	&	0\ar[r]	&	0\ar[d]	&	0\ar[u]	&	0\ar[l]	&	1\ar[l]\\
	&	0\ar[d]	&	0\ar[u]	&	0\ar[d]	&	0\ar[r]	&	0\ar[r]	&	1\ar[u]\\
	&	1\ar[r]	&	1\ar[u]		&	1\ar[r]		&	1\ar[u]\\
}
\]
	        \caption{With whiskers.}
	    \end{subfigure}
\caption{Folding to obtain nonmonotonicity in the 2D rectangular model.}\label{nov27-2024}
\end{figure}

\begin{figure}
\[
\xymatrix@R=1em@C=1em{
P_3^{\text{rect}}		&			&			&	1\ar[d]	&	1\ar[l]	&	1\ar[d]	&	1\ar[l]	&			&	\\
		&	0\ar[d]	&	0\ar[l]	&	0\ar[d]	&	0\ar[u]	&	0\ar[d]	&	0\ar[u]	&	0\ar[l]	&	\\
1\ar[d]	&	0\ar[l]	&	0\ar[u]	&	0\ar[l]	&	0\ar[u]	&	0\ar[d]	&	0\ar[r]	&	0\ar[u]	&	\\
1\ar[r]	&	0\ar[r]	&	0\ar[r]	&	0\ar@{..>}[d]&0\ar[u]&	0\ar[l]	&	0\ar[u]	&	0\ar[l]	&1\ar[l]	\\
1\ar[d]	&	0\ar[l]	&	0\ar[l]	&	0\ar[l]	&	0\ar[l]	&	0\ar[r]	&	0\ar[r]	&	0\ar[r]	&1\ar[u]	\\
1\ar[r]	&	0\ar[r]	&	0\ar[d]	&	0\ar[r]	&	0\ar[d]	&	0\ar[u]	&	0\ar[l]	&	0\ar[l]	&1\ar[l]	\\
		&	0\ar[d]	&	0\ar[l]	&	0\ar[u]	&	0\ar[d]	&	0\ar[r]	&	0\ar[d]	&	0\ar[r]	&1\ar[u]	\\
		&	0\ar[r]	&	0\ar[d]	&	0\ar[u]	&	0\ar[d]	&	0\ar[u]	&	0\ar[r]	&	0\ar[u]	&	\\
		&			&	1\ar[r]	&	1\ar[u]	&	1\ar[r]	&	1\ar[u]	&			&			&	\\
}
\]
\caption{Folding to obtain nonmonotonicity in the 2D rectangular model.
}\label{follows}
\end{figure}

\begin{theorem}\label{nov23-2024}
	For each $n$ there exists a word $w_n$ and a fold $P^{\mathrm{rect}}_n$ of $w_n$ in the rectangular lattice such that the 0s of $w_n$ form a $(2n+1)$-square
	and $w_n$ contains no subword $0^{2n+1}$.
\end{theorem}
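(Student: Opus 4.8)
The plan is to realize $P^{\mathrm{rect}}_n$ as a single self-avoiding walk that visits every vertex of the solid $(2n+1)\times(2n+1)$ block $S$ of lattice sites — each of which receives the label $0$ — together with some auxiliary sites lying strictly outside $S$, which receive the label $1$. Since all of $S$, and nothing else, is labelled $0$, the zeros of the resulting word $w_n$ automatically form a $(2n+1)$-square, so the only real content is to route the walk so that it never passes through more than $2n$ consecutive vertices of $S$. Because a walk can enter and leave $S$ only at boundary vertices, this is equivalent to producing a decomposition of $S$ into lattice paths $Q_1,\dots,Q_t$, each with at most $2n$ vertices, where $Q_2,\dots,Q_{t-1}$ each have both endpoints on $\partial S$ and where $Q_1$ (resp.\ $Q_t$) has its last (resp.\ first) endpoint on $\partial S$; the walk is then $Q_1 R_1 Q_2 R_2\cdots R_{t-1}Q_t$, where each ``linker'' $R_i$ is a nonempty path through the exterior of $S$ joining the last vertex of $Q_i$ to the first vertex of $Q_{i+1}$.

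The one genuinely delicate vertex is the centre of $S$, which lies at lattice distance $n$ from $\partial S$: any boundary-to-boundary path through it would need at least $2n+1$ vertices, so it cannot sit in any of $Q_2,\dots,Q_{t-1}$. I would therefore start the walk at the centre and let $Q_1$ run straight out to one side, using $n+1$ vertices. Every remaining vertex of $S$ then lies at distance at most $n-1$ from $\partial S$, and this is exactly what lets one cover $S\setminus Q_1$ by boundary-anchored pieces of length $\le 2n$: a ``U-turn'' piece that enters through one side, runs parallel to it and comes back out uses $2n$ vertices and reaches depth $n-1$, and together with some $L$-shaped pieces (to pick up the central column around the already-used centre vertex) and $O(1)$ arcs cut from the boundary frame one obtains a genuine tiling. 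The cases $n=2$ and $n=3$ are the folds already drawn in Figures~\ref{nov27-2024} and~\ref{follows}.

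Linking the pieces is routine: the complement of $S$ in the plane is connected and unbounded and there are only $O(n)$ linkers, so one can, for instance, send $R_i$ radially out to distance $i$ from $S$, around $S$ at that level, and back in, keeping the $R_i$ pairwise disjoint and disjoint from $S$. With the labelling as above, between any two consecutive $1$'s (which all occur inside some $R_i$) the walk is confined to a single $Q_j$ and so visits at most $2n$ vertices of $S$; hence $w_n$ contains no factor $0^{2n+1}$, and the prefix and suffix of zeros have lengths $|Q_1|,|Q_t|\le 2n$ as well. Thus $w_n$ and $P^{\mathrm{rect}}_n$ have the required properties.

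The main obstacle is the explicit tiling in the second paragraph: one must pin down the $U$-, $L$- and frame-pieces for every $n$ and check that they partition $S\setminus Q_1$ with no gap and no overlap. Here the odd side length $2n+1$ forces the usual ``off-by-a-constant'' bookkeeping — which piece absorbs the leftover row or column, exactly how the central column is split around the centre vertex, and ensuring none of these remedial pieces exceeds $2n$ vertices. Once a correct tiling is fixed, the linking and both claimed properties of $w_n$ follow immediately, as indicated above.
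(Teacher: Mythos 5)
Your proposal takes essentially the same route as the paper's (itself only a proof sketch): the paper builds $P^{\mathrm{rect}}_n$ as a fold of a word beginning $0^{n+1}1$, i.e.\ a straight run from the centre of the square out to its boundary, with the rest of the square covered by boundary-anchored pieces joined through the exterior by $1$-labelled ``whiskers'' --- exactly your $Q_1$ followed by the $Q_j$'s and linkers $R_i$. The explicit tiling bookkeeping you flag as the remaining obstacle is likewise left implicit in the paper, which only exhibits the folds for small $n$ (Figures~\ref{nov27-2024} and~\ref{follows}), so your write-up is, if anything, more explicit about the general-$n$ strategy.
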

\begin{proof}[Proof sketch]
	We construct a suitable fold $P_n^{\text{rect}}$ of a word starting with $0^{n+1}1$ in a square.
	See \Cref{follows} for an illustration of $P^{\mathrm{rect}}_5$.
	For an illustration of $P_2^{\text{rect}}$, before and after adding whiskers, see \Cref{nov27-2024}.
\end{proof}
Now we use some classical results:
\begin{theorem}[{\cite[Fact 3.1]{10.1145/279069.279080}}]\label{bl3}
	Among all sets of cardinality $n^3$ in the rectangular lattice, an $n$-cube strictly maximizes the number of internal edges.
\end{theorem}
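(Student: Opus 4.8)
The statement is the three-dimensional edge-isoperimetric inequality for the cubic grid $\mathbb Z^3$, in essence due to Hart and recorded by Berger and Leighton; the plan is to reprove it by a projection-counting argument together with an analysis of the equality case. Let $S\subseteq\mathbb Z^3$ with $|S|=n^3$, let $e(S)$ denote the number of internal edges of $S$, and for $i\in\{1,2,3\}$ let $e_i(S)$ be the number of internal edges in coordinate direction $i$ and $\pi_i(S)\subseteq\mathbb Z^2$ the projection of $S$ forgetting the $i$th coordinate, so that the fibres of $\pi_i$ are the lattice lines parallel to $e_i$. The first step is the elementary remark that on any one line parallel to $e_i$ the number of internal edges of $S$ equals the number of occupied points minus the number of maximal runs of occupied points, hence is at most one less than the number of occupied points on that line; summing over the lines that meet $S$ gives $e_i(S)\le |S|-|\pi_i(S)|$ for each $i$.

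Adding the three inequalities and bounding $|\pi_1(S)|+|\pi_2(S)|+|\pi_3(S)|$ from below, first by the arithmetic--geometric mean inequality and then by the discrete Loomis--Whitney inequality $|\pi_1(S)|\,|\pi_2(S)|\,|\pi_3(S)|\ge|S|^{2}$, yields
\[
e(S)\ \le\ 3|S|-\bigl(|\pi_1(S)|+|\pi_2(S)|+|\pi_3(S)|\bigr)\ \le\ 3|S|-3|S|^{2/3}\ =\ 3n^{3}-3n^{2}.
\]
Since the $n$-cube has $n^{2}(n-1)$ internal edges in each of the three directions, hence $3n^{3}-3n^{2}$ in all, it attains the bound.

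For strict maximality I would trace the equality conditions back through the displayed chain. Equality there forces simultaneously: (i) $e_i(S)=|S|-|\pi_i(S)|$ for every $i$, i.e.\ every axis-parallel line meets $S$ in a single run; (ii) $|\pi_1(S)|=|\pi_2(S)|=|\pi_3(S)|$; and (iii) equality in Loomis--Whitney. By the equality case of the Loomis--Whitney inequality, (iii) forces $S$ to be a combinatorial product $A_1\times A_2\times A_3$; then (i) forces each $A_i$ to be a block of consecutive integers, (ii) forces $|A_1|=|A_2|=|A_3|$, and $|A_1|\,|A_2|\,|A_3|=n^{3}$ pins this common length to $n$. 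Hence every optimal $S$ is a translate of the $n$-cube, and every other set of cardinality $n^{3}$ has strictly fewer internal edges.

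The step I expect to be the genuine obstacle is this last equality analysis, concretely the characterization of the equality case of discrete Loomis--Whitney; everything preceding it is routine. If one prefers to avoid quoting that characterization, an alternative is a Bollob{\'a}s--Leader style argument: slice $S$ by the hyperplanes $x_3=\text{const}$, apply the planar edge-isoperimetric inequality to each slice, control the direction-$3$ edges by a compression that makes the slices nested, and finish with a one-dimensional convexity optimization over the slice sizes subject to their sum being $n^{3}$. This route replaces the Loomis--Whitney equality case by the (also nontrivial) monotonicity of the internal edge count under compression, so the difficulty is relocated rather than removed.
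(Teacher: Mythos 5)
Your argument is sound, but note that the paper itself offers no proof of this statement at all: it is quoted as Fact 3.1 of Berger and Leighton and used as a black box (indeed, \Cref{bl} is then disposed of by saying its proof is ``the same''). So you are supplying a proof where the paper supplies a citation. Your route --- the line-by-line count $e_i(S)\le |S|-|\pi_i(S)|$, then AM--GM plus the discrete Loomis--Whitney inequality $|\pi_1(S)|\,|\pi_2(S)|\,|\pi_3(S)|\ge |S|^2$ to get $e(S)\le 3|S|-3|S|^{2/3}=3n^3-3n^2$, matched by the $n$-cube's $3n^2(n-1)$ internal edges --- is correct, and your equality analysis (every axis line a single run, equal projections, and the Loomis--Whitney equality case forcing a product set $A_1\times A_2\times A_3$ with each $A_i$ an interval of length $n$) does deliver the strictness claim, i.e.\ that only translates of the $n$-cube attain the bound. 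You are right to flag the equality case of discrete Loomis--Whitney as the one nonroutine ingredient: it is true (equality holds exactly for boxes, as one sees from the equality conditions in the entropy/Shearer proof, or from Bollob\'as--Thomason), but it must be cited or proved, since the rest of your chain collapses without it; your fallback via compressions is the standard alternative and, as you say, merely relocates the work. What your approach buys over the paper's citation is a self-contained and quantitatively explicit bound $3|S|-3|S|^{2/3}$ valid for all cardinalities, not just perfect cubes, which is more than the theorem as stated requires.
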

\begin{theorem}\label{bl}
	Among all sets of cardinality $n^2$ in the rectangular lattice, an $n$-square strictly maximizes the number of internal edges.
\end{theorem}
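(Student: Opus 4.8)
The plan is a direct projection argument (in the spirit of Loomis--Whitney), counting horizontal and vertical edges line-by-line and then applying an elementary AM--GM estimate to the bounding box. Let $S\subseteq\mathbb Z^2$ with $|S|=n^2$, and write $e(S)$ for its number of internal edges. In a fixed row, if $S$ meets it in $k$ points forming $m$ maximal runs, then that row contributes exactly $k-m$ horizontal edges; since $m\ge 1$ whenever $k\ge 1$, this is at most $k-1$, with equality precisely when the row is a single interval. Summing over rows, $S$ has at most $n^2-r$ horizontal edges, where $r$ is the number of nonempty rows; symmetrically at most $n^2-c$ vertical edges, with $c$ the number of nonempty columns. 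Hence $e(S)\le 2n^2-r-c$.

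Next I would bound $r+c$ from below. Since $S$ is contained in the union of its $r$ occupied rows and $c$ occupied columns, it lies inside an $r\times c$ box, so $rc\ge |S|=n^2$, and AM--GM gives $r+c\ge 2\sqrt{rc}\ge 2n$. Combining, $e(S)\le 2n^2-2n=2n(n-1)$, which is exactly the number of internal edges of an $n$-square ($n(n-1)$ horizontal plus $n(n-1)$ vertical). This proves the inequality.

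For the strictness claim I would track the equality conditions. The bound $e(S)=2n(n-1)$ forces both $e(S)=2n^2-r-c$ and $r+c=2n$; chaining the displayed inequalities, $r+c=2\sqrt{rc}=2n$, so $r=c$ and $rc=n^2$, hence $r=c=n$. Then $S$ sits in an $n\times n$ box of cardinality $n^2=|S|$, so $S$ fills the box, and the equality case of the row/column count forces every row and column of that box to be a single interval; thus the box, and hence $S$, is a translate of $\{1,\dots,n\}^2$. Consequently any $S$ that is not a translate of the $n$-square has $e(S)<2n(n-1)$, giving strict maximization.

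I do not expect a genuine obstacle: the argument is short and robust. The one place needing care is the equality analysis---one must verify that \emph{every} configuration attaining $2n(n-1)$ is pinned down, not merely that the square attains it. As a sanity check (and alternative derivation), one can instead apply \Cref{bl3} to the prism $S\times\{1,\dots,n\}\subseteq\mathbb Z^3$: it has $n\cdot e(S)+n^2(n-1)$ internal edges, and by \Cref{bl3} this is maximized, strictly and only by an $n$-cube, forcing $n\cdot e(S)+n^2(n-1)\le 3n^2(n-1)$ with equality iff $S\times\{1,\dots,n\}$ is an $n$-cube, i.e.\ iff $S$ is an $n$-square---which recovers \Cref{bl} directly from the 3D fact.
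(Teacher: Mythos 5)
Your proof is correct, but it is a genuinely different (and more self-contained) route than the paper's. The paper disposes of this statement in one line --- ``the proof is the same as that of \Cref{bl3}'' --- where \Cref{bl3} is itself quoted from Berger and Leighton without proof; so the paper never actually exhibits an argument. Your projection count (at most $n^2-r$ horizontal and $n^2-c$ vertical edges, then $rc\ge n^2$ and AM--GM) supplies one, and your equality analysis is the part that genuinely needs care and that you handle correctly: the key observation is that $r=c=n$ only pins $S$ to the $n\times n$ combinatorial grid of occupied rows and columns, and it is the single-interval condition in each row and column that then forces this grid to be contiguous, hence a translate of $\{1,\dots,n\}^2$. (Your phrase ``lies inside an $r\times c$ box'' should really say ``grid,'' since the occupied rows and columns need not be contiguous a priori, but your subsequent step repairs this.) Your closing reduction --- applying \Cref{bl3} to the prism $S\times\{1,\dots,n\}$, which is an $n$-cube iff $S$ is an $n$-square --- is also valid and is arguably the cleanest way to make the paper's ``same proof'' remark rigorous in the stated logical order; note, though, that it inverts the more natural dependency (one would usually prove the 2D statement first and bootstrap to 3D), whereas your direct argument stands on its own and could equally well be adapted to prove \Cref{bl3} itself.
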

\begin{proof}
	The proof is the same as that of \Cref{bl3}.
\end{proof}
\begin{theorem}\label{iso-rect}
	There are infinitely many counterexamples to the \emph{cul-de-sac} conjecture in the rectangular lattice.
\end{theorem}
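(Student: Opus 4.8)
The plan is to mirror the triangular argument of \Cref{iso_tri}, using the square edge-isoperimetric inequality \Cref{bl} in place of the hexagonal-daisy inequality \Cref{daisy}, and the folds $P^{\mathrm{rect}}_n$ supplied by \Cref{nov23-2024} (illustrated in \Cref{nov27-2024} and \Cref{follows}) in place of the triangular folds. Fix $n$ and let $w_n, P^{\mathrm{rect}}_n$ be as in \Cref{nov23-2024}, so that $Z(w_n)=(2n+1)^2$, the $0$s of $w_n$ occupy a $(2n+1)$-square $S$ in the fold $P^{\mathrm{rect}}_n$, and $w_n$ contains no factor $0^{2n+1}$. The first ingredient is the bookkeeping identity that, for any word $\theta$ and fold $x$, the quantity $I_{00}(\theta)+f(x,\theta)$ equals the number of edges of the subgraph induced on the $0$-labeled vertices of $x$: the $I_{00}(\theta)$ backbone edges between consecutive $0$s plus the $f(x,\theta)$ scored contacts between non-consecutive $0$s. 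By \Cref{bl} this edge count, taken over all folds of all words with exactly $(2n+1)^2$ zeros, is maximized \emph{strictly} by the $(2n+1)$-square, so
\[
 I_{00}(w_n)+f(P^{\mathrm{rect}}_n,w_n) \;=\; \max_{\eta:\,Z(\eta)=(2n+1)^2}\ \max_{y}\,\bigl(I_{00}(\eta)+f(y,\eta)\bigr),
\]
and a fold attains this value only when its $0$-vertices form a $(2n+1)$-square; in particular $f(P^{\mathrm{rect}}_n,w_n)=J_{\mathrm{rect}}(w_n)$.

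Next I would suppose, towards a contradiction, that $J_{\mathrm{rect}}(w_n)\le J_{\mathrm{rect}}(1w_n1)$ — which is exactly what \Cref{conj} would give via \eqref{referee-last}. Let $y$ be an optimal fold of $1w_n1$ and let $y_0$ be its restriction to $w_n$, obtained by deleting the two flanking vertices. Since the two flanking letters are $1$s, i.e.\ polar, they take part in no scored contact, so $f(y_0,w_n)=f(y,1w_n1)=J_{\mathrm{rect}}(1w_n1)\ge J_{\mathrm{rect}}(w_n)=f(P^{\mathrm{rect}}_n,w_n)$. Hence $I_{00}(w_n)+f(y_0,w_n)$ meets the global maximum displayed above, and by the strictness in \Cref{bl} the $0$-vertices of $w_n$ under $y_0$ form a $(2n+1)$-square $S'$; let $c$ be its center.

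It remains to extract a forbidden run of $0$s. Every vertex of $S'$ is a $0$-vertex and hence lies on the backbone, so $c$ is an interior backbone vertex of $1w_n1$, while the two backbone ends — the flanking $1$-vertices — lie outside $S'$ and therefore at graph distance $\ge n+1$ from $c$. Because a $(2n+1)$-square contains the $L^1$-ball of radius $n$ about its center and graph distance equals $L^1$ distance in $\mathbb{Z}^2$, the $n$ backbone steps on each side of $c$ stay inside $S'$ and hence land on $0$-vertices; together with $c$ these yield a factor $0^{2n+1}$ of $w_n$, contradicting \Cref{nov23-2024}. Thus $J_{\mathrm{rect}}(w_n)>J_{\mathrm{rect}}(1w_n1)$ for every $n$, contradicting \eqref{referee-last} and hence \Cref{conj}; since the words $w_n$ have distinct lengths, this furnishes infinitely many counterexamples.

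I expect the geometric last step to be the main obstacle: unlike the triangular daisy, the square minimizing the edge-isoperimetric problem is not itself a metric ball, so one cannot argue directly from ``the center is at distance $n$ from the boundary'' as in \Cref{iso_tri}, but must instead exploit the radius-$n$ $L^1$-ball that the square contains about its center. A secondary point requiring care is the transfer of optimality from $1w_n1$ to $w_n$ — legitimate precisely because the flanking symbols are polar — together with the use of the \emph{strict} maximality in \Cref{bl} to force the restricted fold's $0$-block to be an honest square rather than merely some set with the same number of internal edges. One should also confirm that \Cref{nov23-2024}, whose proof is only sketched, genuinely produces a fold filling the square with no $0^{2n+1}$ factor; there the whisker construction plays the same role as in the triangular proof.
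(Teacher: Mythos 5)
Your proposal is correct and follows essentially the same route as the paper's own proof: use the words $w_n$ of \Cref{nov23-2024}, invoke the strict edge-isoperimetric maximality of the square (\Cref{bl}) to force any equally good fold of $1w_n1$ to place its zeros in a $(2n+1)$-square, and then derive a forbidden factor $0^{2n+1}$ from the backbone's passage through the center. The paper's version is much terser; your write-up supplies the bookkeeping identity $I_{00}+f$, the strictness argument, and the $L^1$-ball-inside-the-square step that the paper leaves implicit.
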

\begin{proof}
	Let $w_n$ be as in \Cref{nov23-2024}.
	Assume for contradiction that $J_{\mathrm{rect}}(1w_n1)=J_{\mathrm{rect}}(w_n)$. Thus, there is a fold of $1w_n1$ achieving as many points as $w_n$.
	This fold must include a $(2n+1)$-square of 0s by \Cref{bl}.
	Since $1w_n1$ starts and ends with 1s, the fold must start and end outside this $(2n+1)$-square.
	In order to reach the 0 at the center of the $(2n+1)$-square and return outside the square, the word $w_n$ must contain a factor $0^{2n+1}$.
	However, by construction $w_n$ does not contain such a factor.
\end{proof}

\subsection{3D rectangular lattice}
A similar construction succeeds for the 3D rectangular lattice.

\begin{figure}
\[
\xymatrix@R=1em@C=0.7em{
&&				&				&&			&1\ar@{-}[r]&1\ar@{-}[d]&				&				&&			&1\ar@{-}[r]&1\ar@{-}[d]&				&				&&			&1\ar@{-}[r]&1\ar@{-}[d]&				&				&\\
1\ar@{-}[r]&1\ar@{-}[d]&		&		&&			&0\ar@{-}[u]\ar@{-}@/_2pc/[llllll]&0\ar@{-}[r]&	0\ar@{-}[r]	&	1\ar@{-}[d]	&&			&0\ar@{-}[u]&0\ar@{-}[r]&	0\ar@{-}[r]	&	1\ar@{-}[d]	&&			&0\ar@{-}@/_2pc/[llllll]\ar@{-}[u]&0\ar@{-}[r]&	0\ar@{-}[r]	&	1\ar@{-}[d]	&\\
			&1\ar@{-}@/_2pc/[rrrrrr]			&		&		&&1\ar@{-}[r]&0			&0\ar@{-}@/_2pc/[rrrrrr]			&	0\ar@{-}[d]	&	1\ar@{-}[l]	&&1\ar@{-}[r]&0\ar@{-}@/_2pc/[llllll]			&0			&	0\ar@{-}[d]	&	1\ar@{-}[l]	&&1\ar@{-}[r]&0	&0\ar@{-}[l]	&	0\ar@{-}[d]	&	1\ar@{-}[l]	&\\
&&		&				&&1\ar@{-}[u]&0\ar@{-}[l]&0\ar@{-}[l]&	0\ar@{-}[d]	&				&&1\ar@{-}[u]&0\ar@{-}[l]&0\ar@{-}[l]&	0\ar@{-}[d]	&				&&1\ar@{-}[u]&0\ar@{-}[l]&0\ar@{-}[l]&	0\ar@{-}[d]	&				&\\
			&&	&				&&			&			&1\ar@{-}[u]&	1\ar@{-}[l]	&				&&			&			&1\ar@{-}[u]&	1\ar@{-}[l]	&				&&			&			&1\ar@{-}[u]&	1\ar@{-}[l]	&				&\\
}
\]
\includegraphics[width=12cm]{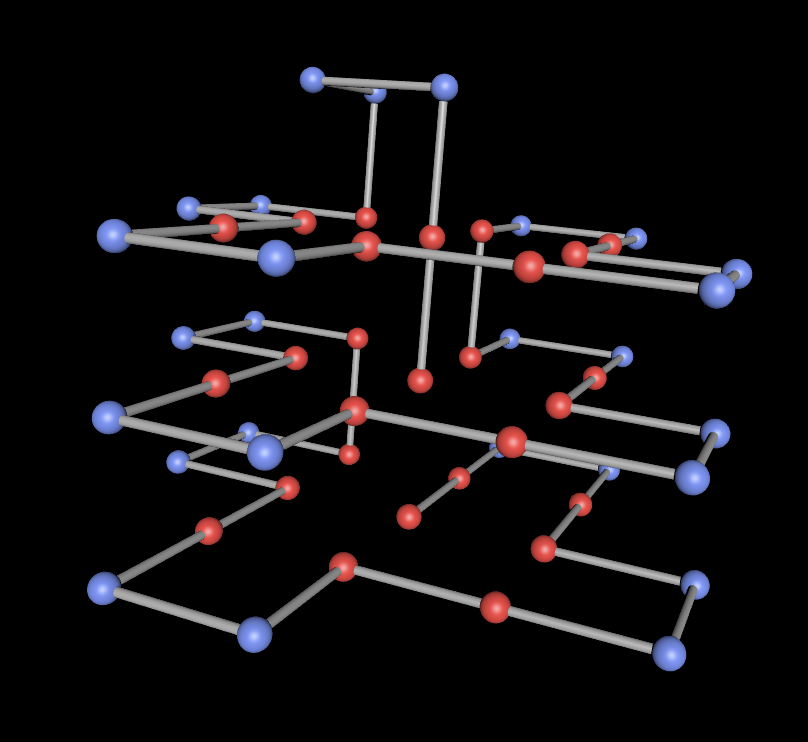}
\caption{Folding to obtain nonmonotonicity in the 3D rectangular model for the word $w=(0011)^{12}011100$.
For a draggable view of the graphics see \url{https://math.hawaii.edu/~bjoern/?sheets=6&xs=17&ys=6&string=001100110011001100110011001100110011001100110011011100&page=labbyfold&moves=aasddsdwwdwaawasewdsddsassawaawdeasddsdwwdwaawasedsff}.
}\label{follows3}
\end{figure}

\begin{theorem}\label{nov23-2024_3D}
For each $n\in\mathbb N$ there exists a word $w_n$ and a fold $P_n$ of $w_n$ in the 3D rectangular lattice such that the 0s of $w_n$ form an $n$-ball
and $w_n$ contains no subword $0^{2n+1}$. Thereby $w_n$ and $w_n 0$ form a counterexample to the \emph{cul-de-sac} conjecture.
\end{theorem}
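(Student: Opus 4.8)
The plan is to imitate, in three dimensions, the two-step argument used for the $2$D rectangular lattice (\Cref{nov23-2024} and \Cref{iso-rect}), with the square replaced by a cube and \Cref{bl} replaced by \Cref{bl3}. The one bookkeeping fact I will use repeatedly is the identity $f(F,u)=e(S_F)-K(u)$, valid for every fold $F$ of every word $u$, where $S_F$ is the set of sites carrying a $0$, $e(S_F)$ is the number of lattice edges internal to $S_F$, and $K(u)$ is the number of occurrences of the factor $00$ in $u$: each such occurrence forces an edge between two consecutive (hence non-scoring) monomers, while every other internal edge of $S_F$ joins two non-consecutive $0$s and so scores a point. Taking maxima, $J_{\mathrm{cube}}(u)=\bigl(\max_F e(S_F)\bigr)-K(u)$, where $F$ ranges over folds of $u$.

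First I would construct $(w_n,P_n)$. Generalizing the fold displayed in \Cref{follows3}, take $w_n=1\,w_n'\,1$, where $w_n'$ is threaded slab by slab through a $(2n+1)\times(2n+1)\times(2n+1)$ cube, every site of which receives a $0$, and where short $1$-whiskers are attached along the boundary of the cube (as in the $2$D construction of \Cref{nov23-2024}) so that no maximal run of $0$s has length exceeding $2n$; in particular $w_n$ has no factor $0^{2n+1}$, and $w_n$ begins and ends with $1$. Let $E=6n(2n+1)^2$ be the number of internal edges of the cube. Since the $0$-region of $P_n$ is the whole cube, which by \Cref{bl3} is the unique edge-isoperimetric optimizer at cardinality $(2n+1)^3$, the identity above yields $J_{\mathrm{cube}}(w_n)=E-K(w_n)$, with $P_n$ optimal. (Prepending and appending the two $1$s changes neither $K$ nor, by the isoperimetric bound, the optimum, and $P_n$ already attains $E-K(w_n)$.)

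Next I would establish $J_{\mathrm{cube}}(w_n 0)<J_{\mathrm{cube}}(w_n)$. As $w_n$ ends in $1$, the word $w_n 0$ has the same $00$-count, $K(w_n 0)=K(w_n)$, and still contains no factor $0^{2n+1}$. Suppose instead $J_{\mathrm{cube}}(w_n 0)\ge J_{\mathrm{cube}}(w_n)$, and let $G$ be an optimal fold of $w_n 0$ with $0$-region $S$, so $\abs{S}=(2n+1)^3+1$ and $e(S)=J_{\mathrm{cube}}(w_n 0)+K(w_n 0)\ge E$; since also $e(S)\le\max\{e(T):\abs{T}=(2n+1)^3+1\}=E+1$, in fact $e(S)\in\{E,E+1\}$. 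I claim $S$ contains a site $p$ at $L^1$-distance at least $n$ from every site of $S$ having a neighbor outside $S$. Granting this, trace $G$: the maximal block of $0$s of $w_n 0$ that visits $p$ is a self-avoiding path inside $S$; because $w_n 0$ begins with a $1$ and its last monomer $0$ occupies a site whose predecessor is the $1$ ending $w_n$, both endpoints of this block occupy sites of $S$ having a neighbor outside $S$, hence lie at $L^1$-distance $\ge n$ from $p$, so the block has length at least $n+n+1=2n+1$ and $w_n 0$ contains $0^{2n+1}$ --- a contradiction. Hence $J_{\mathrm{cube}}(w_n 0)<J_{\mathrm{cube}}(w_n)$, and $(w_n,w_n 0)$ refutes the \emph{cul-de-sac} conjecture for the $3$D rectangular lattice.

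The main obstacle is the claim about the deep site $p$, which is an isoperimetric \emph{stability} statement: \Cref{bl3} pins down the optimizers only at the exact cardinality $(2n+1)^3$, whereas here one has $(2n+1)^3+1$ sites and an edge count within one of optimal. I would prove the claim by observing that the maximum edge count at cardinality $(2n+1)^3+1$ is $E+1$, attained only by a $(2n+1)$-cube with one extra site attached, while the configurations attaining exactly $E$ all consist of a $(2n+1)$-cube with at most a bounded number of sites displaced; in either case the center of that cube serves as $p$. Turning this into a proof uniform in $n$ --- rather than one valid for all sufficiently large $n$ together with a finite verification of the small cases --- is the delicate point; one route is to delete a low-degree site of $S$ to return to cardinality $(2n+1)^3$, note the edge count drops by only $O(1)$, and invoke a robust form of \Cref{bl3}. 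By contrast, the explicit construction of $w_n$ in the second paragraph is routine, if laborious, to write out in full generality.
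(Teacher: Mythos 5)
Your overall strategy is the right one and is in the spirit of what the paper intends: the paper's own proof here is only a sketch exhibiting a single explicit word and fold (the $0^3$-free word $(0011)^{12}011100$ threaded through a $3$-cube), so your attempt to carry out the general case --- via the bookkeeping identity $f(F,u)=e(S_F)-K(u)$, with \Cref{bl3} playing the role that \Cref{bl} plays in \Cref{iso-rect} --- is considerably more explicit than what the paper provides. Your decision to make $w_n$ begin and end with $1$ is also a genuine improvement rather than a cosmetic choice: the paper's sample word ends in $00$, so there $w_n0$ ends in $0^3=0^{2n+1}$ and the ``cannot reach the center'' argument needs extra care, whereas with your normalization $w_n0$ stays $0^{2n+1}$-free and $K(w_n0)=K(w_n)$, which is what your contradiction actually uses.

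The gap is exactly where you locate it, and it is not a removable formality. Because the counterexample pair is $(w_n, w_n0)$ rather than $(w_n, 1w_n1)$, the zero-count jumps from $(2n+1)^3$ to $(2n+1)^3+1$, and \Cref{bl3} --- a strict maximization statement at cardinality exactly $n^3$ --- says nothing about sets of cardinality $(2n+1)^3+1$ whose edge count is within one of optimal. Your ``deep site $p$'' claim is therefore a \emph{stability} theorem for the edge-isoperimetric problem in $\mathbb Z^3$ (near-optimal sets are small perturbations of a cube, uniformly in $n$), and neither the paper nor your proposal supplies it; your suggested repair (delete a low-degree site and invoke ``a robust form of \Cref{bl3}'') renames the missing lemma rather than proving it. Until that lemma is established --- or the counterexample is restated with the pair $(w_n,1w_n1)$, which keeps the zero-count at exactly $(2n+1)^3$ and lets the strict uniqueness in \Cref{bl3} apply verbatim, exactly as in the 2D proof of \Cref{iso-rect} --- the second half of your argument does not close.
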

\begin{proof}[Proof sketch]
	Using the $0^3$-free length-54 word
	\[
		(0011)^{12}011100 = 00 (1100)^{11} 11 01 (1100)^1
	\]
	a suitable fold is shown in \Cref{follows3}.
\end{proof}

\subsection{Berger-Leighton inspired method}
We can also refute the \emph{cul-de-sac} conjecture in an easier way.
Consider the following diagram.
\[
\xymatrix@R=0.7em@C=0.7em{
		&1\ar[d]	&	1\ar[l]	&1\ar[d]	&1\ar[l]	&	1\ar[d]		&1\ar[l]	&		\\
1\ar[d]	&0\ar[l]	&	0\ar[u]	&0\ar[l]	&0\ar[u]	&	0\ar[l]		&0\ar[u]	&1\ar[l]	\\
1\ar[r]	&0\ar[d]	&	0\ar[r]	&0\ar[r]	&0\ar[r]	&	0\ar[d]		&0\ar[r]	&1\ar[u]	\\
1\ar[d]	&0\ar[l]	&	0\ar[u]	&0\ar[r]	&0\ar[d]	&	0\ar[d]		&0\ar[u]	&1\ar[l]	\\
1\ar[r]	&0\ar[d]	&	0\ar[u]	&0\ar[u]	&0\ar[d]	&	0\ar[d]		&0\ar[r]	&1\ar[u]	\\
1\ar[d]	&0\ar[l]	&	0\ar[u]&0\ar[l]		&0\ar[l]	&	0\ar[d]		&0\ar[u]	&1\ar[l]	\\
1\ar[r]	&0\ar[d]	&	0\ar[r]	&0\ar[d]	&0			&	0\ar[d]		&0\ar[r]	&1\ar[u]	\\
		&1\ar[r]	&	1\ar[u]	&1\ar[r]	&1\ar[u]	&	1\ar[r]		&1\ar[u]	&		\\
}
\]
Here, the central square is labeled with 0s throughout and the boundary of the square is labeled with 1s.
In any optimal fold, the 1s would have to be on the boundary and completely cover the boundary.
Therefore, we have nonmonotonicity in the form $J(w)>J(w1)$ for the word $w=0^{16}0110((1100)^2 110)^31100110$.

\section{Knots and links}\label{sec:knots}
We do not know whether knots are necessary for the optimal value function $J_{\mathrm{cube}}$.
W\"ust, Reith and Virnau \cite{PhysRevLett.114.028102} considered knotting in the HP model, but only from a statistical point of view, hence they do not address
whether knotting is necessary for optimal folding.

Of course, an optimal closed fold can include any knot, simply by folding a word of the form $1^*$ in the shape of that knot.
Here a \emph{closed} fold is a walk in the lattice that is self-avoiding except that the first and last
vertices coincide.

Let us therefore say, given a knot $K$, that a \emph{proper $K$-fold} of a word $w$ is a closed fold of $w$ which looks like $K$ (i.e., is ambiently isotopic to $K$)
and such that in a 2-dimensional knot diagram of the fold, the over-and-under vertices are separated by points scored with respect to $w$.

Here is a diagram of the trefoil knot with over-and-under vertices indicated by curving.
\begin{center}
\(
\xymatrix{
\ar[ddd]			&	&				&	\ar[lll]			\\
			&	&	\ar@/^/[rr]	&	&	\ar[ddd]		\\
			&\ar@/^/[rr]	&				&\ar[uu]	&		\\
\ar@/^/[rr]	&	&	\ar[uu]		&	&		\\
			&\ar[uu]	&				&	&	\ar[lll]	\\
}
\)
\includegraphics[width=8cm]{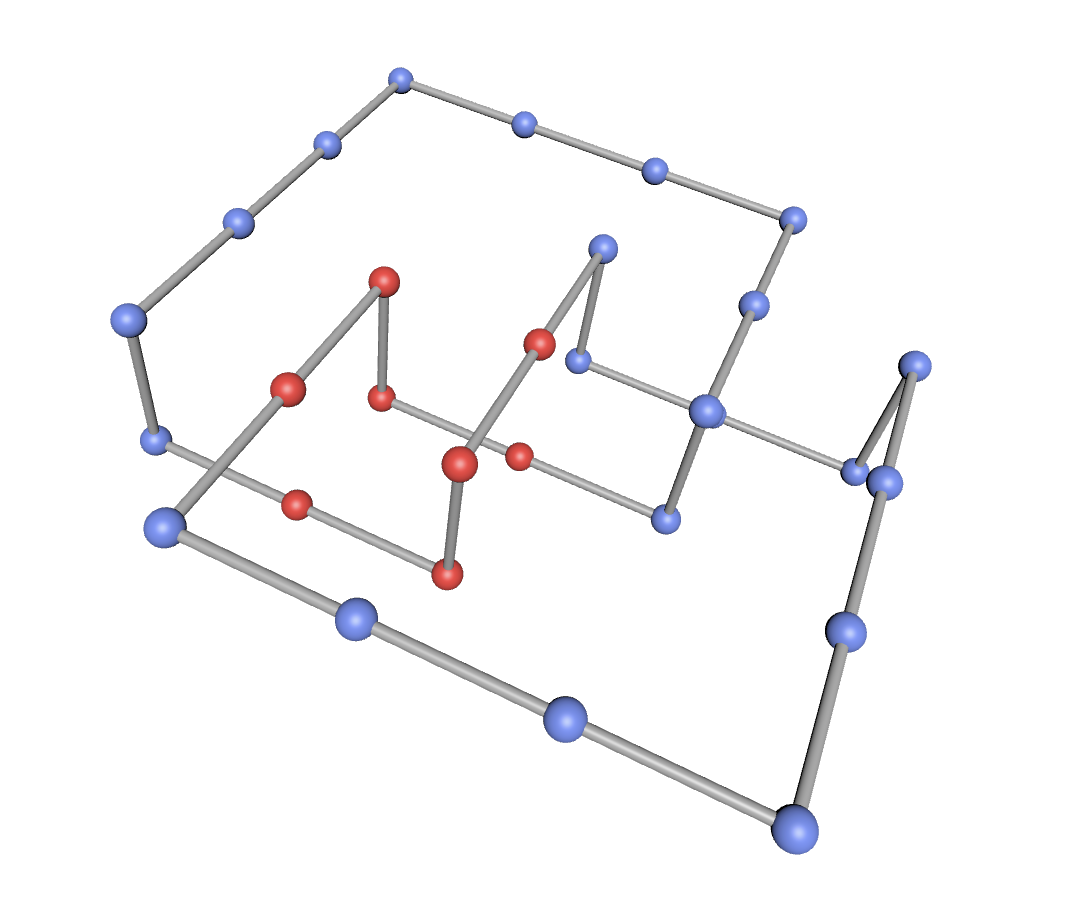}
\end{center}

It is immediate that only if $K$ is \emph{the unknot} can there be a proper $K$-fold of a word of the form $1^*$.

\begin{theorem}
	There exists a cyclic word $w$, a nontrivial knot $K$, and a proper $K$-fold $x$ of $w$ such
	that $x$ is an optimal fold of $w$.
\end{theorem}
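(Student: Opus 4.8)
The plan is to produce an explicit cyclic word $w$ together with a closed self-avoiding fold $x$ of it in the cubic lattice such that $x$ realizes the trefoil $K$ (the minimal nontrivial case, crossing number $3$) with every crossing of a chosen diagram pinned by a scored hydrophobic contact, and then to certify that $x$ is optimal by matching $f(x,w)$ against an isoperimetric upper bound, in the spirit of the arguments behind \Cref{iso-rect} and \Cref{nov23-2024_3D}.

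For the construction, present $K$ as the closure of the $2$-braid $\sigma_1^3$ and realize its braiding box in $\mathbb Z^3$ so that the three crossings appear as three pairwise disjoint vertical edges $\{p_i,p_i+e_3\}$, $i=1,2,3$, with the over-strand passing through $p_i+e_3$ and the under-strand through $p_i$; route the two closure arcs through polar sites only, chosen long enough to close the knot but placed so that the projection along $e_3$ displays exactly these three crossings and no others. Now label sites: put $1$ on the braid strands and the two closure arcs, and $0$ on the six crossing sites $p_i,\,p_i+e_3$, together with an auxiliary hydrophobic block that $x$ weaves through so that all hydrophobic sites of $x$ jointly fill an $n$-cube, with the three vertical edges $\{p_i,p_i+e_3\}$ among its internal edges but not among the chain edges of $x$. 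Each such edge is then a non-consecutive hydrophobic adjacency, hence a scored point separating the over and under vertices at crossing $i$, so $x$ is a proper $K$-fold and $f(x,w)$ equals the number of internal edges joining hydrophobic vertices of $x$, minus $I_{00}(w)$.

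For optimality — the crux — observe that for \emph{any} closed fold $y$ of $w$, the quantity $f(y,w)+I_{00}(w)$ is exactly the number of internal edges joining hydrophobic vertices of $y$; by \Cref{bl3} this is at most the internal-edge count of an $n$-cube, which $x$ attains, so $f(y,w)\le f(x,w)$ and $x$ is optimal. Making this argument complete requires the feasibility checks on the construction: that the chosen $n$-cube admits a self-avoiding traversal compatible with the knot routing; that the closure-arc lengths and the position of the auxiliary block can be chosen with parities making $w$ actually fold as $x$ while forcing $I_{00}(w)$ to equal the value the count needs; and that no unintended hydrophobic adjacency arises in $x$.

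The main obstacle I expect is the tension between the last two steps: optimality pushes the hydrophobic monomers of $x$ to form a compact, cube-like core, while properness demands a clean three-crossing diagram, and a genuinely three-dimensional core tends to create extra crossings under every projection. The construction must therefore thread this needle — routing the knot so that its only diagram crossings are the three pins, with the bulk of the core tucked into a region the chosen projection sees without self-crossing — or else the global isoperimetric bound must be replaced by a tailored bound exploiting that two hydrophobic sites can be lattice-adjacent only if they lie at odd chain-distance (so the realizable contact graph among the hydrophobic sites of $w$ is a subgraph of a complete bipartite graph determined by position parities, which together with a handshaking count caps the optimal score of $w$ at $f(x,w)$ directly). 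Getting one of these two routes to close is where essentially all the work lies; the knot-theoretic identification and the HP scoring are then finite verifications on an explicit figure.
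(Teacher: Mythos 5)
Your optimality certificate is exactly the one the paper relies on: for any closed fold $y$ of $w$, the quantity $f(y,w)+I_{00}(w)$ counts the lattice edges internal to the set of hydrophobic sites, so \Cref{bl3} caps the score by the internal-edge count of an $n$-cube, and any proper $K$-fold whose zeros actually form such a cube is automatically optimal. Where your write-up falls short is that it never produces the witness: you defer the existence of a cube-cored trefoil fold to ``feasibility checks,'' and you flag a tension between compactness of the hydrophobic core and cleanness of the knot diagram that you leave unresolved. Since the existence of that explicit fold is the entire content of the theorem, the proof as written is incomplete.

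The tension you worry about is illusory, and the paper's resolution is far simpler than your braid-closure-plus-auxiliary-block scheme. Take $n=2$: the paper folds the length-$24$ cyclic word $w=1^2(0^41^7)^2$ as a minimal $24$-edge lattice trefoil (minimality by the cited result of Scharein et al.), arranged so that the two runs of $0^4$ traverse opposite faces of a unit cube and together occupy all $8$ vertices of a $2\times2\times2$ block. Then $I_{00}(w)=6$, the block has $12$ internal edges, and the fold scores $12-6=6$, meeting the \Cref{bl3} bound — no auxiliary hydrophobic block, no control over which projection is used, and no bipartite/handshaking fallback (which would in any case be weaker than \Cref{bl3}, whose proof already exploits the parity bipartition). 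You also over-constrain the construction by demanding that the three crossings be pinned by three designated vertical hydrophobic edges; the paper's definition of a proper $K$-fold only asks that the over- and under-vertices of a diagram be separated by scored contacts, which the contacts between the two $0^4$ runs supply. To repair your argument, replace the open-ended construction with this explicit length-$24$ example and verify properness and the trefoil type directly on the figure.
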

\begin{proof}
	Let $w=1^2 (0^4 1^7)^2$, a word of length 24, let $K$ be the trefoil knot, and
	let $x$ be the fold shown below.
	As seen earlier, the central red cube (where red is identified with 0 and blue with 1) guarantees an optimal fold.
\end{proof}

\includegraphics[width=12cm]{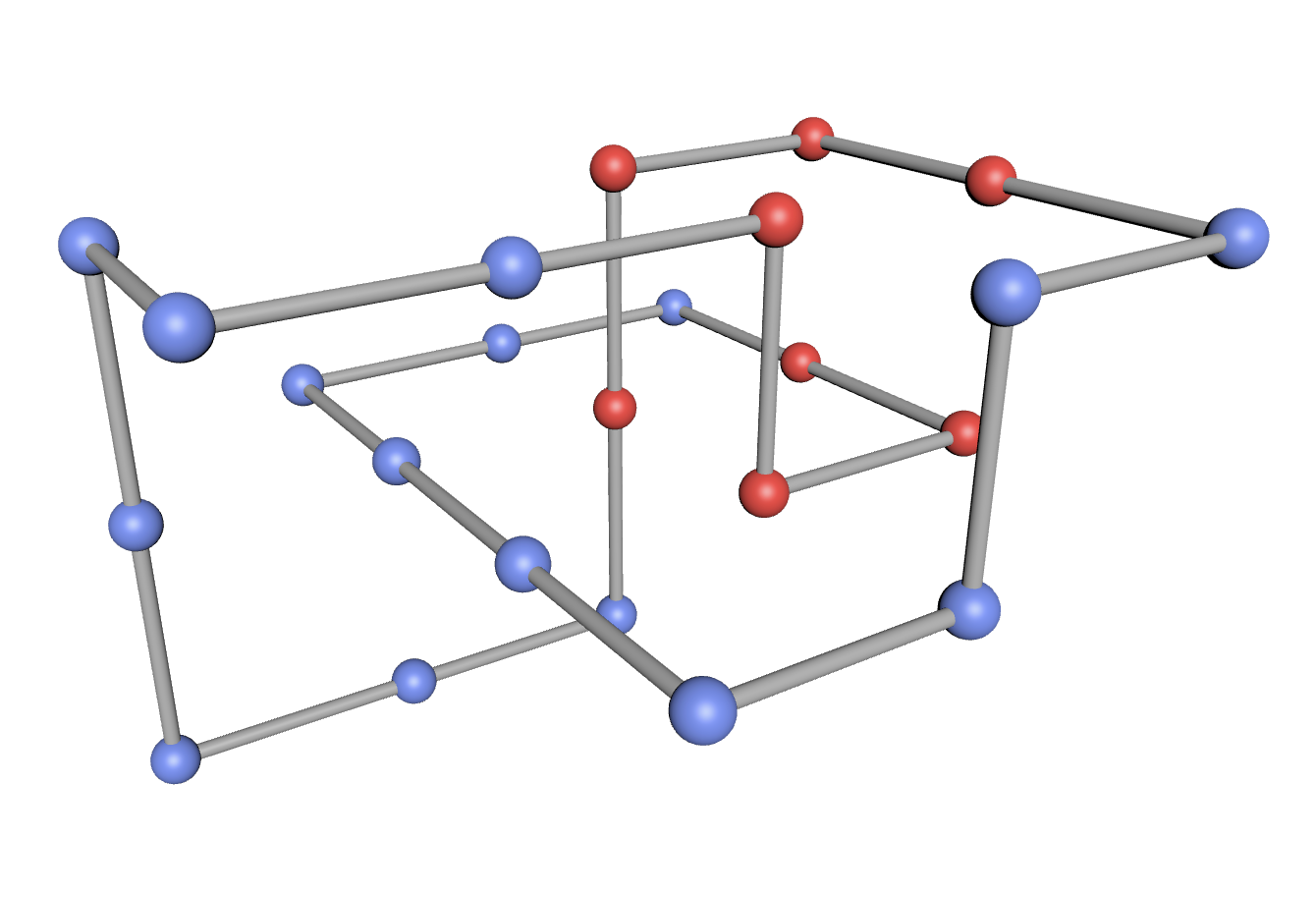}
\begin{verbatim}
https://math.hawaii.edu/~bjoern/?sheets=9&xs=17&ys=17
&string=1100001111111000011111111
&page=labbyfold&moves=eddfwwffssaeeweddffaaaes
\end{verbatim}

It is clear that we can achieve a central packed cube of 0s with a minimal length knot. The minimal length of a trefoil knot in the 3D integer lattice is 24 \cite{MR3265402}.
This was already achieved above for the word
$1^2 (0^41^7)^2$ and 6 points.
However, this is not uniquely optimal as we can fairly easily take the given configuration of red monomers and add 7 and 9 blue monomers (using 8 and 10 edges) in a non-knotted manner.
Thus, we know that a knot can appear in a minimal-length, optimal-score, but not-uniquely-optimal-score, manner in the 3D HP model.

\paragraph{An optimal link.}
Below we have a 4-cube formed from two linked loops of length 8 and $64-8=56$, respectively. We can make every monomer a 0 (hydrophobic)
but it is more interesting to only make a certain 3-cube that way. Then the words read are $0^8$
and (starting from the linking point) $0^6 1^{20} 0^2 1^2 0^2 1^9 0^2 1^4 0^2 1^2 0^5$.
It is not hard to see that these two words can also form a 4-cube in an unlinked way, however.
\[
\xymatrix@R=0.2em@C=2.0em{
		&			&			&	1\ar[dr]&			&				\\
		&			&	1\ar[dl]&			&	1\ar[dr]&				\\
		&	1\ar[dr]&			&	1\ar[ul]&			&	1\ar[dr]	\\
1\ar[dddddddd]&			&	1\ar[dr]&			&	1\ar[ul]&			&	1\ar[dl]\\
		&	1\ar[ul]&			&	1\ar[dr]&			&	1\ar[ul]&		\\
		&			&	1\ar[ul]&			&	1\ar[dl]&				\\
		&			&			&	1\ar[ul]&			&				\\
\\
		&			&			&	1\ar@/^2pc/[uuuuuuuu]&			&				\\
		&			&	1\ar[dr]&			&	0\ar[ul]&				\\
		&	1\ar[ur]&			&	0\ar[dl]&			&	0\ar[ul]	\\
1\ar[ur]&			&	0\ar[dl]&			&	0\ar[dr]&			&	0\ar[ul]\\
		&	1\ar[dr]&			&	0\ar[dr]&			&	0\ar[ur]&		\\
		&			&	1\ar[ur]&			&	0\ar[dl]&				\\
		&			&			&	1\ar@/^2pc/[dddddddd]&			&				\\
		&			&			&			&			&			&			\\
		&			&			&	1\ar@/_2pc/[dddddddd]&			&				\\
		&			&	1\ar[ur]&			&	0\ar@{-}[dr]&				\\
		&	1\ar[ur]&			&	0\ar@{-}[ur]&			&	0\ar@{-}[dr]	\\
1\ar[ur]&			&	0\ar@{-}[ur]&			&	0\ar@/_2pc/[uuuuuuuu]&			&	0\ar@{-}[dl]\\
		&	1\ar[ul]&			&	0\ar@{-}[ul]&			&	0\ar@{-}[dl]&		\\
		&			&	1\ar[ul]&			&	0\ar@{-}[ul]&				\\
		&			&			&	1\ar[ul]&			&				\\
		&			&			&			&			&			&			\\
		&			&			&	1\ar[dr]&			&				\\
		&			&	1\ar[dl]&			&	0\ar[dl]&				\\
		&	1\ar[dl]&			&	0\ar[ul]&			&	0\ar[dl]	\\
1\ar[dr]&			&	0\ar[dr]&			&	0\ar@/^2pc/[uuuuuuuu]&			&	0\ar[ul]\\
		&	1\ar[ur]&			&	0\ar[dl]&			&	0\ar[ur]&		\\
		&			&	1\ar[dr]&			&	0\ar[ur]&				\\
		&			&			&	1\ar[ur]&			&				\\
		&			&			&			&			&			&			\\
}
\]
\paragraph{An optimal trefoil knot.}
Since the 0s form a $4\times 4\times 4$ grid, the following is an optimal fold of the length-70 cyclic word
\(
1^3 0^{22} 1^3 0^{42}
\)
in the shape of a trefoil knot.
\[
\xymatrix@R=0.3em@C=2.0em{
		&			&			&	0\ar[dr]&			&			&		\\
		&			&	0\ar[ur]&			&	0\ar[dr]&			&		\\
		&	0\ar[dl]&			&	0\ar[ul]&			&	0\ar[dr]&		\\
0\ar[dr]&			&	0\ar[ur]&			&	0\ar@/^2pc/[dddddddd]		&			&	0\ar[dl]	\\
		&	0\ar[ur]&			&	0\ar[ur]&			&	0\ar[dl]&		\\
		&			&	0\ar[ur]&			&	0\ar[dl]&			&		\\
		&			&			&	0\ar[ul]&			&			&		\\
\\
		&						&			&	0\ar[dr]&			&			&		\\
		&						&	0\ar[ur]&			&	0\ar[dr]&			&		\\
		&	0\ar@/_2pc/[uuuuuuuu]	&			&	0\ar[ul]&			&	0\ar[dr]&		\\
0\ar[ur]&						&	0\ar[ur]&			&	0\ar[dl]&			&	0\ar@/^/[dddddddd]	\\
		&	0\ar[ur]			&			&	0\ar[dl]&			&	0\ar@/^/[dddddddd]		&		\\
		&						&	0\ar[dr]&			&	0\ar[ur]&			&		\\
		&						&			&	0\ar[ur]&			&			&		\\
\\
					&						&			&	0\ar[dl]&			&			&		\\
					&						&	0\ar[dr]&			&	0\ar[ul]&			&		\\
					&	0\ar[dl]			&			&	0\ar@/^2pc/[dddddddd]		&			&	0\ar[ul]&		\\
0\ar@/^2pc/[uuuuuuuu]	&						&	0\ar[ul]&			&	0\ar[ur]&			&	0\ar[dr]&			&		&\\
					&	0\ar@/^2pc/[uuuuuuuu]	&			&	0\ar[ul]&			&	0\ar[ul]&			&	1\ar[dl]&	\\
					&						&	0\ar[ur]&			&	0\ar[dl]&			&	1\ar[dl]&			&\\
					&						&			&	0\ar[ul]&			&	1\ar[ul]&			&			&		\\
					&						&			&			&			&			&			&			&\\
					&						&			&	0\ar[dr]&			&	1\ar[dr]&		\\
					&						&	0\ar[ur]&			&	0\ar[ur]&			&	1\ar[dr]&	\\
					&	0\ar[dl]			&			&	0\ar[ul]&			&	0\ar[dl]&			&1\ar[dl]	\\
0\ar[dr]			&						&	0\ar[ul]&			&	0\ar[dr]&			&	0\ar[ul]	\\
					&	0\ar@/_2pc/[uuuuuuuu]	&			&	0\ar[ul]&			&	0\ar[dl]&		\\
					&						&	0\ar[ur]&			&	0\ar[dl]&			&		\\
					&						&			&	0\ar[ul]&			&			&		\\
}
\]
Let us say that the \emph{hydrophobic core} of a fold is the map sending locations in the word to locations of the hydrophobic monomers, that
is undefined on polar monomers.
Let us moreover say that an \emph{essential fold} of the knot $K$ is a fold such that all folds with the same hydrophobic core are homotopic to $K$.
Then the above example shows that an essential fold of the trefoil knot $3_1$ can be an optimal fold of a word.
We conjecture that this is true for all knots.

\section{The indispensability of links}\label{sec:indispensable}
Agarwala et al.~\cite[Section 4]{ABDDFHMS97} added varying levels of hydrophobicity to the HP model.
In this expanded model we now demonstrate the existence of a link of four chains which achieves a higher
score than any non-linked ensemble.
Our example uses a multiset of words over the alphabet $\{\mt 0,\mt 1, \mt 2\}$, namely
\[
	\mathscr M_m = \{(\mt{01})^4, (\mt{01})^4, (\mt{01})^4, \mt{2}^3\mt{1}^{9+2m}\}
\]
for any $m\ge 0$.
We will focus on the case $m=0$, but it will be easy to see that any $m$ will give the same conclusion and hence we have infinitely many examples.

We will use $[x]_k\mathbb Z/k\mathbb Z$ to denote the congruence class of $x\in\mathbb Z$ mod $k$.
When $k$ can be inferred from context we write $[x]=[x]_k$.

\begin{definition}\label{def:emb}
A \emph{closed-chain embedding} $\psi$ of a multiset $\mathscr M$ of words over an alphabet $\Sigma$ consists of:
for each word $w$ occurring $k_w$ times in $\mathscr M$, a family of $k_w$ many injective functions
\[
	f_{w,i} : \mathbb Z/|w|\mathbb Z \to \mathbb Z^3, \quad 1 \le i \le k_w,
\]
each having the property that $d(f_{w_i}([x]),f_{w_i}([x+1]))=1$ for all $x$,
such that the ranges of the functions $f_{w,i}$ as $w$ and $i$ vary are all disjoint.

The \emph{contents} $\alpha(x, \psi)$ of a point $x\in\mathbb Z^3$ under $\psi$ is the symbol $a\in\Sigma$,
if any, such that for some $w$ and $i,k$, $f_{w,i}(k)=x$ and $w(k)=a$.

The set $\mathscr C(\psi)$ of \emph{potential contacts}
consists of sets $s=\{x,y\}$ where $x,y\in\mathbb Z^3$ are adjacent ($d_1(x,y)=1$) and have nonempty contents,
and such that for no $w$ and $i,k$ is $\{x,y\}=\{f_{w,i}([k]), f_{w,i}([k+1])\}$.
\end{definition}
While Dill's original model concerns open chains,
closed chains in HP models are topologically natural and have been studied by Aichholzer et al~\cite{AICHHOLZER2003139}.

\begin{definition}[Levels-of-hydrophobicity HP model~\cite{ABDDFHMS97}]
In the context of \Cref{def:emb},
each symbol $a\in\Sigma$ is assigned a \emph{level of hydrophobicity} $h(a)\in\mathbb N$.
The contribution (or value) of a pair of symbols $\{a,b\}$ is
\[
v_h(\{a,b\})= \begin{cases}
	h(a) + h(b) & \text{if }h(a)\ne 0\ne h(b),\\
	0	  & \text{otherwise}.
	\end{cases}
\]
If the value of a potential contact is positive then we call it a \emph{contact}.
The contribution of a pair of lattice points $x,y\in\mathbb Z^3$ is defined by
\[
	v'_h(\{x,y\})=v_h(\{\alpha(x,\psi),\alpha(y,\psi)\}).
\]
The \emph{score} of $\psi$ for $\mathscr M$ is
\[
	f(\psi,\mathscr M,h)=\sum_{s\in\mathscr C} v'_h(s),
\]
the sum of all contributions.
The optimal value function $J(\mathscr M,h)$ is the maximum of $f(\psi, \mathscr M,h)$ over all closed-chains embeddings $\psi$.
\end{definition}

In order to obtain the indispensability of links we will use the hydrophobicity level function $h_c$ with $c\ge 10$ (ten).
\begin{definition}
Let $h_c : \{\mt 0,\mt 1,\mt 2\} \to \mathbb N$ be given by
\[
	h_c(\mt 1)=0,\quad h_c(\mt 0)=1,\quad\text{and}\quad h_c(\mt 2)=c.
\]
\end{definition}

In the context of the hydrophobicity level function $h_c$ with $c>1$,
we will refer to contributions of value $c+1$ and 2 as \emph{$c$-contributions} and \emph{1-contributions}, respectively.
\Cref{levels-required} shows that $c$ has been chosen large enough that $c$-contributions matter much more than 1-contributions.

The intended embedding of $\mathscr M_0$ is illustrated below,
with the symbols $\mt{0}$ and $\mt{2}$ in red and $\mt{1}$ in blue, and defined formally in \Cref{df:intended}.

\includegraphics[width=10cm]{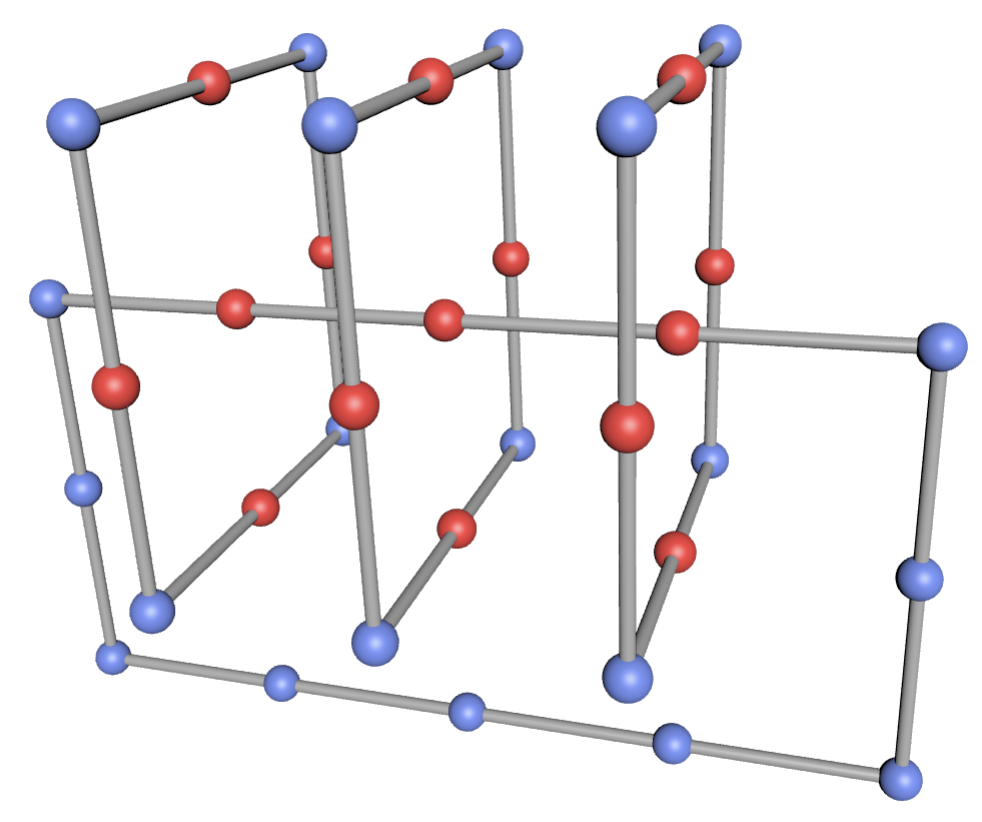}

\begin{definition}[Intended embedding of $\mathscr M_0$]\label{df:intended}
The intended embedding of the four-word complex $\mathscr M_0$ consists of three embeddings of $(01)^4$ and one embedding of $2^31^9$.
These are functions $f_i: \mathbb Z/8\mathbb Z \to \mathbb Z^3$, $i=-1,0,1$,
and $f_2 : \mathbb Z/12\mathbb Z \to \mathbb Z^3$.
In the intended embedding, the range of $f_2$ is the rectangle with corners $(-2,0,0), (2,0,0), (-2,0,-2), (2,0,-2)$ and
$(f_2(0), f_2(1), f_2(2))=((-1,0,0), (0,0,0), (1,0,0))$.
The range of $f_k$ for $k\in\{-1,0,1\}$ is the square with corners $(k,\pm 1,\pm 1)$, with the range of $f$ on $\{1,3,5,7\}$ being the corners.
\end{definition}

\begin{definition}\label{df:bent}
A set $\{x,y,z\}\subseteq\mathbb Z^3$ with $d_1(x,y)=d_1(y,z)=1$ and $x\ne z$ (and hence  $d_1(x,z)=2$) is
\emph{straight} if $y=\frac{x+z}2$ and \emph{bent} otherwise.
\end{definition}

\begin{definition}\label{df:Phi}
	Let $x,y\in\mathbb Z^3$. The $L^1$ distance $d_1$ between $x$ and $y$ is
	\[
		d_1(x,y)=\sum_{i=0}^2 |x(i)-y(i)|.
	\]
	Let $A\subseteq\mathbb Z^3$. The $L^1$ distance between $x$ and $A$ is
	\[
		d(x,A) = \min\{d_1(x,y)\mid y \in A\}.
	\]
	The vertex-boundary of $A$, $\Phi(A)$ (in the terminology of Harper, Section 4.1.1) is
	\[
		\Phi(A) = \{x\mid d(x,A)=1\}.
	\]
\end{definition}

\begin{lemma}\label{fourteen-neighbors}
	Let $A = \{(-1,0,0), (0,0,0), (1,0,0)\} \subseteq \mathbb Z^3$.
	The cardinality of $\Phi(A)$ (as in \Cref{df:Phi}) is 14.
\end{lemma}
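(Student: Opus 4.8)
The plan is to compute $\Phi(A)$ directly by enumerating the lattice points at $L^1$-distance exactly $1$ from the three-point set $A = \{(-1,0,0),(0,0,0),(1,0,0)\}$. First I would note that $\Phi(A)$ consists of those $x\in\mathbb Z^3\setminus A$ with $d(x,A)=1$, i.e. $x$ is a lattice neighbor of at least one point of $A$ but is not itself in $A$. Since each point $p\in\mathbb Z^3$ has exactly six neighbors $p\pm e_0, p\pm e_1, p\pm e_2$, the set $\Phi(A)$ is contained in the union of the six-neighborhoods of the three points of $A$, minus $A$ itself. I would organize the count by which point(s) of $A$ a candidate neighbor is adjacent to.

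The key step is the inclusion–exclusion bookkeeping. The neighbors of $(0,0,0)$ are $(\pm1,0,0),(0,\pm1,0),(0,0,\pm1)$; of these, $(\pm1,0,0)$ lie in $A$, so $(0,0,0)$ contributes the four points $(0,\pm1,0),(0,0,\pm1)$. The neighbors of $(1,0,0)$ are $(2,0,0),(0,0,0),(1,\pm1,0),(1,0,\pm1)$; removing $(0,0,0)\in A$ leaves five new points $(2,0,0),(1,\pm1,0),(1,0,\pm1)$. By the mirror symmetry $x_0\mapsto -x_0$, the point $(-1,0,0)$ likewise contributes five points $(-2,0,0),(-1,\pm1,0),(-1,0,\pm1)$. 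I would then check that these three lists — $4$, $5$, and $5$ points — are pairwise disjoint (the middle list has $x_0=0$, the others have $|x_0|\in\{1,2\}$, and the two outer lists have opposite signs of $x_0$), so no overlap correction is needed, giving $4+5+5=14$.

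I do not expect any genuine obstacle here; the only thing to be careful about is not double-counting points such as $(0,0,0)$, $(1,0,0)$, $(-1,0,0)$ that are neighbors of one point of $A$ but themselves belong to $A$ (these must be excluded from $\Phi(A)$ by definition, since $d(x,A)=0$ there), and confirming that the candidate points genuinely have $L^1$-distance $1$ and not $0$ from $A$. A clean way to present this is simply to exhibit the $14$-element list explicitly: $(0,0,\pm1),(0,\pm1,0)$ together with $(\pm2,0,0)$, $(\pm1,\pm1,0)$, and $(\pm1,0,\pm1)$ (with independent signs in the last two families, giving $4+4$ points), and then observe these are distinct and each is at distance $1$ from $A$ while no other lattice point is. This makes the lemma a short finite verification, which is all that is needed downstream.
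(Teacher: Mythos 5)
Your proof is correct and is essentially the same direct finite enumeration the paper uses: both arguments simply list the $14$ lattice points at $L^1$-distance $1$ from $A$ and verify there is no double counting. The only difference is bookkeeping---the paper groups the points by the coordinate axis along which they differ from $A$ (giving $2+6+6$), while you group them by which element of $A$ they are adjacent to (giving $4+5+5$); both yield the same explicit $14$-element set.
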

\begin{proof}
	For each $x\in\Phi(A)$ there is a $y\in A$ such that $x(i)\ne y(i)$ for some $i$
	and $x(j)=y(j)$ whenever $j\ne i$.
	If $i=0$ then $y\in\{(-2,0,0), (2,0,0)\}$.
	If $i=1$ then $y=(a,b,0)$, and
	if $i=2$ then $y=(a,0,b)$, for some $a\in\{-1,0,1\}$ and $b\in\{-1,1\}$.
	Thus, there are $3\times 2\times 2+2=14$ such points $x$ overall.
\end{proof}

\begin{lemma}\label{thirteen-neighbors}
	Let $A = \{(0,1,0), (0,0,0), (1,0,0)\} \subseteq \mathbb Z^3$.
	The cardinality of $\Phi(A)$ is 13.
\end{lemma}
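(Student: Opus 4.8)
The plan is to mimic the proof of \Cref{fourteen-neighbors}, carefully enumerating the boundary points of the bent triple $A = \{(0,1,0),(0,0,0),(1,0,0)\}$ and accounting for the one coincidence that makes the count drop from $14$ to $13$. First I would note, exactly as before, that every $x\in\Phi(A)$ differs from some $y\in A$ in exactly one coordinate $i$, by exactly $\pm 1$, and agrees with $y$ in the other two coordinates; conversely each such $x$ that is not itself in $A$ lies in $\Phi(A)$. So the natural upper bound is to count, for each of the three points $y\in A$ and each of the three coordinate directions $i\in\{0,1,2\}$, the two candidates $y\pm e_i$, giving $3\times 3\times 2 = 18$ candidates, and then subtract (a) candidates that actually lie in $A$ itself, and (b) candidates that are produced by more than one $(y,i)$ pair, i.e. double-counted.

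Next I would organize the bookkeeping. The three points of $A$ are $p_1=(0,1,0)$, $p_2=(0,0,0)$, $p_3=(1,0,0)$. The adjacent pairs are $\{p_1,p_2\}$ (differing in coordinate $1$) and $\{p_2,p_3\}$ (differing in coordinate $0$). A candidate $y\pm e_i$ lands \emph{inside} $A$ precisely when: from $p_1$ moving $-e_1$ we reach $p_2$; from $p_2$ moving $+e_1$ we reach $p_1$; from $p_2$ moving $+e_0$ we reach $p_3$; from $p_3$ moving $-e_0$ we reach $p_2$. That is four of the $18$ candidates, leaving $14$ candidates that are genuinely outside $A$. Then I would find the overlaps among these $14$: a point outside $A$ adjacent to two points of $A$ must be adjacent to both $p_2$ and one of $p_1,p_3$ (since $p_1,p_3$ are at distance $2$ from each other and the only common neighbor pattern goes through $p_2$). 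The common neighbors of $p_1=(0,1,0)$ and $p_2=(0,0,0)$ other than each other: points at distance $1$ from both — these are $(0,0,0)\pm$ something that is also $(0,1,0)\pm$ something; solving, the common neighbors are exactly... I would check there is exactly one such point outside $A$ from the pair $\{p_2,p_3\}$ and exactly one from $\{p_1,p_2\}$, or however it works out; the net effect must be that the true count of distinct external points is $14 - 1 = 13$.

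Concretely, the one coincidence to pin down: $p_2+e_1 = p_1$ is internal, but consider $(1,1,0)$ — it is $p_1 + e_0$ and also $p_3 + e_1$, and it is not in $A$, so it is counted twice among the $14$. Similarly I should check whether $(0,0,0)$-neighbours like $(0,1,0)$-type overlaps occur elsewhere; I expect $(1,1,0)$ is the unique doubly-counted external point, giving $14 - 1 = 13$. So the final tally is: $18$ raw candidates, minus $4$ that fall in $A$, minus $1$ for the double count of $(1,1,0)$, equals $13$. The main obstacle — though a mild one — is making sure the case analysis is exhaustive: that $(1,1,0)$ really is the \emph{only} external point arising from two different $(y,i)$ pairs, and that no external candidate coincides with another in a way not captured by "common neighbor of two points of $A$." I would close the argument by explicitly listing all $13$ points, grouped by which coordinate they differ from their witness in, exactly as in the proof of \Cref{fourteen-neighbors}, so the reader can verify the count directly.
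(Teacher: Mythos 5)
Your count is correct and your overall strategy (direct finite enumeration) matches the paper's, but the bookkeeping is organized differently. The paper partitions $\Phi(A)$ by the coordinate in which a boundary point differs from its witness in $A$: the $6$ points obtained by moving out of the plane $z=0$, plus $7$ in-plane points displayed in a small grid picture, giving $6+7=13$. You instead run an inclusion--exclusion over all $3\times 3\times 2=18$ candidates $y\pm e_i$, subtract the $4$ that land back in $A$, and subtract $1$ for the unique externally double-counted point $(1,1,0)=p_1+e_0=p_3+e_1$; that arithmetic is right and yields the same $13$ points. One intermediate sentence of yours is wrong, though, and you should repair it before the final write-up: you assert that a point adjacent to two points of $A$ ``must be adjacent to both $p_2$ and one of $p_1,p_3$'' and that ``the only common neighbor pattern goes through $p_2$.'' The opposite is true: $\mathbb Z^3$ with the $L^1$ adjacency is bipartite, so the adjacent pairs $\{p_1,p_2\}$ and $\{p_2,p_3\}$ have \emph{no} common neighbors at all, and the only pair that can share a neighbor is $\{p_1,p_3\}$ (at distance $2$), whose common neighbors are exactly $(0,0,0)\in A$ and $(1,1,0)\notin A$. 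Your concrete follow-up paragraph identifies $(1,1,0)$ correctly and the parity observation closes the uniqueness question you left hedged, so with that sentence fixed the proof is complete; your inclusion--exclusion framing has the mild advantage of making the ``$14$ versus $13$'' comparison with \Cref{fourteen-neighbors} transparent (the straight triple has no external common neighbors, the bent one has exactly one), whereas the paper's split is quicker to verify from the picture.
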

\begin{proof}
	For each $x\in\Phi(A)$ there is a $y\in A$ such that $x(i)\ne y(i)$ for some $i$
	and $x(j)=y(j)$ whenever $j\ne i$.
	If $i=2$ then $y\in\{(a,0,b)\mid a\in\{-1,0,1\},b\in\{\pm 1\}\}$.
	Otherwise, $y$ is one of the seven points indicated by $*$ below, where elements of $A$ are indicated by $a$.

\begin{tikzpicture}[nodes={font=\large}]
\matrix[matrix of nodes,
        nodes in empty cells,
        row sep=-\pgflinewidth,
        column sep=-\pgflinewidth,
        nodes={minimum size=7mm,anchor=center,draw=black}]{
   & * & &  \\
 * & a & * & \\
 * & a & a & * \\
   & * & * &\\
};
\end{tikzpicture}

Thus, there are $6+7=13$ such points $x$ overall.
\end{proof}

\begin{lemma}\label{urgentcare-sep8}
	Let $A = \{(-1,0,0), (0,0,0), (1,0,0)\} \subseteq \mathbb Z^3$.
	Suppose $f: (\mathbb Z / 8\mathbb Z) \to \mathbb Z^3 \setminus A$ is injective, and
	$\|f([x])-f([x+1])\|_1=1$ for all $x\in\mathbb Z$.
	Suppose that the $L^1$ distance from $f(2x)$ to $A$ is $d(f(2x),A)=1$ for all $x$.
	Moreover, suppose that $\|f(x_0) - (0,0,0)\|_1=1$ for some $x_0$.
	Then the image of $f$ is
	\[
		\{(0,x,y)\in\mathbb Z^3 \mid |x|\le 1, |y|\le 1, (x,y)\ne (0,0)\}.
	\]
\end{lemma}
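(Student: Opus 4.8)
The plan is to first pin down the even-indexed vertices and then propagate the forced structure around the cycle.

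\emph{Locating and organizing the even vertices.} Since $f$ avoids $A$ while $d(f(2x),A)=1$ for every $x$, each even-indexed vertex lies in the vertex-boundary $\Phi(A)$, which by \Cref{fourteen-neighbors} (and the explicit description in its proof) is the $14$-point set consisting of the two axial points $(\pm2,0,0)$ together with the twelve side points $(a,\pm1,0)$ and $(a,0,\pm1)$ for $a\in\{-1,0,1\}$. Because $\mathbb Z^3$ is bipartite under coordinate-sum parity, any two distinct vertices with a common neighbour are at $L^1$-distance exactly $2$; applying this to $f(2x)$ and $f(2x+2)$, the four points $v_i:=f(2i)$ $(i\in\mathbb Z/4\mathbb Z)$ are distinct, $\|v_i-v_{i+1}\|_1=2$, and each $f(2i+1)$ is a common lattice-neighbour of $v_i$ and $v_{i+1}$ that avoids $A$.

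\emph{A rigidity lemma for the ``face centers''.} Let the \emph{face centers} be $C=\{(0,\pm1,0),(0,0,\pm1)\}$ and the \emph{corners} be $\{(0,\pm1,\pm1)\}$. The crux is: \emph{if some $v_i$ lies in $C$, then $\{v_0,v_1,v_2,v_3\}=C$, each $f(2i+1)$ is a corner, and hence the image of $f$ is the asserted $8$-point ring.} Using the symmetry group of $A$ we may assume $v_i=(0,1,0)$. Its lattice-neighbours are $(\pm1,1,0),(0,2,0),(0,1,\pm1)$ and $(0,0,0)$, the last of which is in $A$, so the two odd vertices adjacent to $v_i$ are among the first five. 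For a candidate odd vertex $u$, the next even vertex $v'$ must be a lattice-neighbour of $u$ lying in $\Phi(A)\setminus\{v_i\}$ and outside $A$; a direct check shows that no such $v'$ exists when $u\in\{(\pm1,1,0),(0,2,0)\}$ (every neighbour of such a $u$ other than $v_i$ is in $A$ or at distance $\ge 2$ from $A$, hence not in $\Phi(A)$), while for $u=(0,1,\pm1)$ the unique such $v'$ is the face center $(0,0,\pm1)$. Thus the two odd neighbours of $v_i$ are exactly the corners $(0,1,\pm1)$ and the adjacent even vertices are $(0,0,\pm1)\in C$; iterating this observation at each face center closes the cycle after eight steps onto $C$ together with the four corners, which is precisely $\{(0,x,y):|x|\le1,\ |y|\le1,\ (x,y)\ne(0,0)\}$.

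\emph{Invoking the origin-adjacency hypothesis, and the remaining obstacle.} The six lattice-neighbours of $(0,0,0)$ are $(\pm1,0,0)\in A$ and the four face centers, so the hypothesis $\|f(x_0)-(0,0,0)\|_1=1$ together with $f$ avoiding $A$ forces $f(x_0)\in C$. If $x_0$ is even, $f(x_0)$ is one of the $v_i$ and the rigidity lemma finishes the proof. The delicate case is when $f(x_0)$ is \emph{odd}-indexed: then (as in the rigidity lemma) its two even neighbours are forced to be the side points $(\pm1,1,0)$, which are \emph{not} face centers, so one must instead trace the quadrilateral $v_0v_1v_2v_3$ through side points of $\Phi(A)$ and rule out the configurations that wrap an endpoint $(\pm1,0,0)$ of $A$ rather than its center. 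I would dispatch this by a finite enumeration, recording for each distance-$2$ pair of side points whether a common neighbour can avoid $A$ (the obstructive pairs being $(a,\pm1,0)$ and $(a,0,\pm1)$, whose only common neighbour is $(a,0,0)\in A$) and checking the resulting short list of admissible quadrilaterals; this step, rather than the rigidity lemma, is where I expect the real work to be. Note that in the intended embedding of $\mathscr M_0$ (\Cref{df:intended}) the origin-adjacent $\mt0$ is an even-indexed vertex, so for the application it suffices to read the hypothesis in the even-indexed form $\|f(2x_0)-(0,0,0)\|_1=1$, under which the first two paragraphs above already give the full conclusion.
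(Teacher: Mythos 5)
Your even-indexed branch is correct and is essentially the paper's own argument: once $f(x_0)$ is one of the four neighbours of the origin outside $A$ \emph{and} even-indexed, the propagation you describe (each face center forces corner odd-vertices, which force the next face center) pins the cycle down to the stated ring. The genuine gap is exactly where you suspect it, the odd-indexed case, but the situation is worse than ``real work to be done'': the finite enumeration you propose does not close. First, your list of obstructive pairs is incorrect: $(a,1,0)$ and $(a,0,1)$ have \emph{two} common neighbours, namely $(a,0,0)\in A$ and $(a,1,1)\notin A$, so steps between side points with the same first coordinate are not blocked. Carrying the enumeration out honestly produces, for example, the cycle
\[
f(0),\dots,f(7)\;=\;(1,1,0),\ (0,1,0),\ (-1,1,0),\ (-1,1,1),\ (-1,0,1),\ (0,0,1),\ (1,0,1),\ (1,1,1),
\]
which is injective, avoids $A$, has unit steps (including the closing step $f(7)\to f(0)$), has all four even-indexed vertices at $L^1$-distance $1$ from $A$, and has $f(1)=(0,1,0)$ adjacent to the origin --- yet its image is not the stated ring. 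So with $x_0$ allowed to be odd the conclusion is simply false, and no case analysis can rescue it.

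The correct repair is the one you already point to in your final sentence: the origin-adjacency hypothesis must be read with $x_0$ even (as it is in the application, where the monomer next to the middle $\mt 2$ is a $\mt 0$, i.e.\ even-indexed), and under that reading your first two paragraphs constitute a complete proof. You may take some satisfaction in the fact that the paper's own proof has the same blind spot: it opens with ``by symmetry, we may assume that $f(x_0)=(0,1,0)$ and $x_0=0$,'' but no symmetry of the hypotheses changes the parity of $x_0$, so it too silently restricts to the even case.
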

\begin{proof}
	By symmetry, we may assume that $f(x_0) = (0,1,0)$ and $x_0=0$. Since $d(f(2),A)=1$ and $d(f(0),f(1))=d(f(1),f(2))=1$,
	$f(2)$ must be either $(0,0,1)$ or $(0,0,-1)$. By symmetry assume $f(2) = (0,0,1)$.
	Then by process of elimination $f(4)$ must be $(0,-1,0)$.
	Then $f(6)$ must be $(0,0,-1)$ and by interpolation we are done.
\end{proof}

\begin{lemma}\label{levels-required}
	Suppose we are given an embedding $\psi$ of the multiset of words $\mathscr M_m$.
	Let $\phi$ be the intended embedding.
	Suppose that $c\ge 10$ and that $\phi$ has strictly more $c$-contributions than $\psi$.
	Then the score of $\phi$ under $h_c$ is strictly greater than the score of $\psi$ under $h_c$.
\end{lemma}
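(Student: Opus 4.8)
The plan is to split both scores into a ``$c$-part'' and a ``$1$-part'' and reduce the inequality to a bound on the number of $1$-contributions of an arbitrary embedding. First I would note that under $h_c$ the only contacts of positive value are $c$-contributions (a $\mt 0$ next to a $\mt 2$, value $c+1$) and $1$-contributions (two $\mt 0$'s, value $2$): a $\{\mt 2,\mt 2\}$ contact cannot occur, since the three $\mt 2$'s of $\mt 2^3\mt 1^{9+2m}$ are consecutive on a single loop and hence pairwise at even $L^1$-distance, so at distance $0$ or $\ge 2$; and for the same parity reason no two $\mt 0$'s of a single loop $(\mt{01})^4$ are adjacent, as they occupy the even positions of a length-$8$ cycle. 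Writing $N$ and $M$ for the numbers of $c$-contributions and $1$-contributions, the score is $(c+1)N+2M$.

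Next I would read off from \Cref{df:intended} that the intended embedding $\phi$ has $N_\phi=12$ and $M_\phi=8$: each of the three squares carries its four $\mt 0$'s at the edge-midpoints of a $3\times3$ face, each adjacent to the $\mt 2$ at that face's centre (so $12$ $c$-contributions), and each $\mt 0$ of the middle square is adjacent to the corresponding $\mt 0$ of each neighbouring square (so $8$ $1$-contributions), with no further positive contacts. Thus $\mathrm{score}(\phi)=12(c+1)+16=12c+28$. Since the hypothesis gives $N_\psi\le N_\phi-1=11$,
\[
	\mathrm{score}(\phi)-\mathrm{score}(\psi)=(c+1)(N_\phi-N_\psi)+2(M_\phi-M_\psi)\ge (c+1)+16-2M_\psi,
\]
and for $c\ge 10$ this is $\ge c-9>0$ provided $M_\psi\le 13$. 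So it suffices to prove the purely combinatorial statement that every embedding of $\mathscr M_m$ has at most $13$ $\{\mt 0,\mt 0\}$-contacts.

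Here I would invoke the parity structure. There are exactly $12$ symbols $\mt 0$, four from each loop $(\mt{01})^4$, and by the observation above the four $\mt 0$'s of one loop all share a single checkerboard colour of $\mathbb Z^3$ and are pairwise non-adjacent. Hence the graph of $\{\mt 0,\mt 0\}$-contacts is tripartite with each colour class monochromatic; since a lattice edge joins opposite colours, at most two classes interact, and when they do, one of them --- a single loop's four $\mt 0$'s, which together with its four interleaved $\mt 1$'s form an $8$-cycle in $\mathbb Z^3$ --- is incident to every $\{\mt 0,\mt 0\}$-contact. Each such $\mt 0$ has only $6-2=4$ non-backbone neighbours, which already gives $M_\psi\le 16$. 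The remaining gap, and the step I expect to be the genuine obstacle, is to improve this to $M_\psi\le 13$ using the $8$-cycle constraint: the four $\mt 0$'s of an $8$-cycle have at most one common neighbour of multiplicity $4$ and otherwise nearly disjoint free-slot neighbourhoods, so no eight lattice points (the $\mt 0$'s of the other two loops) can absorb more than $13$ of the sixteen incidences (in fact the truth is a bit smaller, but $13$ suffices). I would make this precise by a short case analysis over the shapes of an $8$-cycle in $\mathbb Z^3$ --- the $3\times3$-face boundary, the $2\times4$ rectangle, and the non-planar cycles --- checking in each case that the best placement of the eight outer $\mt 0$'s yields at most $13$ contacts, and noting as a sanity check that $\phi$'s own value $8$ is well within this bound. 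Combining $M_\psi\le 13$ with the displayed inequality finishes the proof; the case $m>0$ is identical, since the added $\mt 1$'s never carry positive value.
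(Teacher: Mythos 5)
Your reduction is sound as far as it goes: the score does decompose as $(c+1)N+2M$ (the parity observations ruling out $\{\mt 2,\mt 2\}$ and intra-loop $\{\mt 0,\mt 0\}$ contacts are correct), the intended embedding does have $N_\phi=12$, $M_\phi=8$, and your inequality correctly reduces the lemma, at $c=10$, to the claim that every competing embedding has at most $13$ one-contributions. That threshold, $13$, is exactly the number the paper needs too. But you reach for it by a genuinely different and much harder route, and the step you yourself flag as ``the genuine obstacle'' is a real gap, not a routine verification. You want the \emph{unconditional} geometric bound $M_\psi\le 13$ from the structure of an $8$-cycle's alternating vertices. Your sketch of it rests on the unproven assertion that the four lucky $\mt 0$'s ``have at most one common neighbour of multiplicity $4$ and otherwise nearly disjoint free-slot neighbourhoods,'' which is precisely what a proof would have to establish; and the proposed enumeration is already incomplete as stated (the planar $8$-cycles include the boundary of the L-tromino as well as the $3\times3$ face and the $2\times4$ rectangle, and ``the non-planar cycles'' is a sizable family, e.g.\ Hamiltonian cycles of a unit cube and various skew cycles, each with its own pattern of occupied common neighbours). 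The bound does appear to be true --- in every shape one can check, a multiplicity-$4$ slot forces all other slots to multiplicity at most $2$ and the backbone $\mt 1$'s eat the shared corners, capping the eight best slots well under $14$ --- but nailing this down is the entire content of the lemma under your approach, and it is missing.

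The paper avoids this case analysis entirely by \emph{coupling} the bound on $1$-contributions to the $c$-contribution deficit $x=N_\phi-N_\psi$: since each $\mt 0$ is involved in at most one $c$-contribution (modulo the bent-$\mt 2$ side case) and only $4$ of the $12$ zeros are on the lucky chain, having $12-x$ $c$-contributions forces at least $4-x$ lucky zeros to spend one of their four free neighbour slots on a $\mt 2$, leaving at most $(4-x)\cdot 3+x\cdot 4=12+x$ one-contributions; at $x=1$ this is the needed $13$, with no geometry of $8$-cycles required, and it is this coupling that produces the threshold $c\ge 10$ (from $c+1>8/x+2$). I would recommend either adopting that conditional counting argument, or, if you want to keep your unconditional claim $M_\psi\le 13$, carrying out the full classification of $8$-cycles in $\mathbb Z^3$ together with the feasibility constraints on where the eight outer $\mt 0$'s (which themselves lie on two $(\mt{01})^4$ cycles) can sit.
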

\begin{proof}
	The score of $\phi$ is the sum of the $c$-contributions and the 1-contributions.
	Each chain of length 8 in $\mathscr M_m$ has all its $\mt 0$s in a fixed congruence class mod 2
	under the map
	\[
		x\in \mathbb Z^3 \mapsto x(0)+x(1)+x(2)\text{ (mod 2).}
	\]
	Therefore, two of these three chains (the ``unlucky two'') do not achieve any 1-contributions with each other.

	Each $\mt 0$ has two neighbors that are adjacent in its chain and $N=6-2$ other neighboring lattice points.
	The part of the score arising from 1-contributions involving the lucky chain is then at most $z \times s \times N$
	where $z=4$ is the number of $\mt 0$s in the lucky chain, and $s=1+1=2$ is the number of points arising from a single 1-contribution.
	As a first approximation, the maximum score from 1-contributions is therefore $4\times 2\times 4 = 32$.

	However, some of these potential 1-contributions may instead be $c$\--contributions.
	For each $i$, let $s_i$ be the portion of the score coming from 1-contributions involving a lucky $\mt 0$ involved in $i$ many $c$-contributions.
	Let $t$ be the portion of the score coming from $c$-contributions.
	Then the maximum score obtainable is at most $t+\sum_{i=0}^2 s_i$.

	Next, let $x$ be the number of missing $c$-contributions in $\psi$, i.e., the number of $c$-contributions of $\phi$
	minus the number of $c$-contributions of $\psi$.
	By assumption $x\ge 1$.
	If $x\le 4$ then the maximum score obtainable is at most
	\begin{equation}\label{bound}
		t + s_1 + s_0 = (12-x)(c+1) + (4-x)\times 2\times 3 + x \times 2 \times 4
	\end{equation}
	assuming that each $\mt 0$ is only involved in at most one $c$-contribution.

	If the $\mt 2$s are bent then
	in principle one special $\mt 0$ could be involved in two $c$-contributions.
	In that case only $13-2=11$ $c$-contributions are possible by \Cref{thirteen-neighbors} and the bound becomes
	\[
			(11-x)(c+1) + s_2 + s_1 + s_0 = (11-x)(c+1) + 1\times 2\times 2  + (3-x)\times 2\times 3 + x \times 2 \times 4
	\]
	which is smaller than \eqref{bound} and hence can be ignored.

	The amount \eqref{bound} is strictly less than the score for the intended folding,
	\(
		12(c+1) + 4\times 2\times 2
	\)
	whenever
	\[
		c+1 > 8/x +2.
	\]
	Here the right-hand side is maximized at $x=1$, so that we need $c \ge 10$.

	If $x>4$ then the maximum score obtainable is at most
	\[
		(12-x)(c+1) + 4 \times 2 \times 4
	\]
	so in this case we only require $c\ge 4$.
\end{proof}

\begin{remark}
The reader may wonder whether a factor of ten for hydrophobicity levels is excessive.
However, White~\cite{white96}\footnote{See also \url{https://blanco.biomol.uci.edu/hydrophobicity_scales.html}} found that
in a certain \emph{interface scale}, the amino acid Glu$^-$ (glutamate)
has a $\Delta G_{wif}$ (kcal/mol) experimentally determined hydrophobicity value of 2.02, and Gly (glycine) has the value 0.01,
giving a factor of 202.
\end{remark}

\subsection{Topological conclusions}
To properly formulate our conclusions we recall some notions from topology.
\begin{definition}\label{isotopy}
Let $N$ and $M$ be manifolds and $g$ and $h$ be embeddings of $N$ in $M$.  A continuous map 
$F:M \times [0,1] \rightarrow M $ 
is defined to be an \emph{ambient isotopy} taking $g$ to $h$ if
each $F_t: M \rightarrow M, F_t(\cdot) = F(\cdot, t)$ is a homeomorphism from $M$ to itself,
$F_0$ is the identity function and $F_1 \circ g = h$.
\end{definition}
We are interested in \Cref{isotopy} in the case where $N$ is a disjoint union of two circles and $M$ is $\mathbb R^3$.
Note that the embedding $h$ may well take a circle to a rectangle or any other homeomorph of a circle.

\begin{definition}
	Let $a_0,a_1\in \mathbb N$.
	Let $f_i:\mathbb Z/a_i\mathbb Z\to\mathbb Z^3$ be injective functions with the property that
	$d(f_{i}([x]),f_{i}([x+1]))=1$ for all $x$ and both $i$.
	The union $C_i$ of all the line segments from $f_{i}([x])$ to $f_{i}([x+1])$ for $x\in\mathbb Z$ is then homeomorphic to a circle.
	If $C_0$ and $C_1$ form a proper link in $\mathbb R^3$
	(that is, there is no ambient isotopy mapping $C_0$ and $C_1$ to circles (or and kind of knots) that are separated by a plane)
	then we say, by extension, that $f_0$ and $f_1$ are \emph{linked}.
\end{definition}

\begin{theorem}\label{link-main}
	The maximum score of all closed-chain embeddings of $\mathscr M_m$, for any $m\ge 0$,
	is only achieved for embeddings for which for some $j$, the range of $f_{(\mt{01})^4,j}$ and the range of $f_{\mt{2}^3\mt{1}^{9+2m},1}$
	are linked.
\end{theorem}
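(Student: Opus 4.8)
The plan is to show that any embedding $\psi$ achieving the optimal score must realize the intended $c$-contact structure up to symmetry, and that this structure is topologically a link. First I would invoke \Cref{levels-required}: since the intended embedding $\phi$ achieves $12$ $c$-contributions, any embedding with fewer than $12$ $c$-contributions has strictly smaller score for $c\ge 10$. Hence an optimal $\psi$ must attain exactly $12$ $c$-contributions — the maximum — among its potential contacts. The word $\mt 2^3\mt 1^{9+2m}$ contains three $\mt 2$'s, and by \Cref{fourteen-neighbors} (if they are straight) or \Cref{thirteen-neighbors} (if bent) the number of lattice points adjacent to the three-$\mt 2$ segment is at most $14$, of which two are consumed by the chain itself at the ends of the segment; a short count then shows $12$ $c$-contributions can only be realized when the three $\mt 2$'s are in the straight configuration $\{(-1,0,0),(0,0,0),(1,0,0)\}$ (up to lattice symmetry), and the $12$ neighboring points carrying the $c$-contributions must all be occupied by $\mt 0$'s from the three $(\mt{01})^4$ chains.

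Next I would argue that this forces each $(\mt{01})^4$ chain to wrap around the $\mt 2$-segment in the prescribed way. The twelve $\mt 0$'s available (four per chain) exactly fill the twelve neighbors of $A=\{(-1,0,0),(0,0,0),(1,0,0)\}$ at $L^1$-distance $1$. A parity/congruence argument (all $\mt 0$'s of one chain lie in a single class mod $2$ under $x\mapsto x(0)+x(1)+x(2)$) shows the twelve target points split $4+4+4$ among the three chains according to the $x(0)$-coordinate, so one chain supplies the four neighbors with $x(0)=-1$, one the four with $x(0)=0$, and one the four with $x(0)=1$. For the ``middle'' chain, whose four $\mt 0$'s must be the four points $(0,\pm1,0),(0,0,\pm1)$, \Cref{urgentcare-sep8} applies directly (its hypotheses — injectivity, unit steps, every second vertex at distance $1$ from $A$, and some vertex adjacent to the origin — are exactly what we have just established) and pins down the image of $f_{(\mt{01})^4,\text{mid}}$ to be the eight-point set $\{(0,x,y):|x|,|y|\le 1,(x,y)\ne(0,0)\}$, i.e.\ a square loop threaded by the $\mt 2$-segment.

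Finally, the topological conclusion: the curve $C_{\text{mid}}$ traced by the middle chain is a small square in the plane $x(0)=0$ whose interior contains the point $(0,0,0)$, and the curve $C_2$ traced by $\mt 2^3\mt 1^{9+2m}$ passes through $(0,0,0)$ transverse to that plane (it enters through $(-1,0,0)$ and $(1,0,0)$, which lie on opposite sides). A standard linking-number computation — e.g.\ $C_{\text{mid}}$ bounds a disk meeting $C_2$ in exactly one point — gives linking number $\pm 1$, so $C_{\text{mid}}$ and $C_2$ cannot be separated by a plane; they are properly linked in the sense defined above. Taking $j$ to be the index of the middle chain completes the proof, and since the argument never used $m$ beyond ``$9+2m\ge 9$'' it works for every $m\ge 0$.

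\medskip

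The step I expect to be the main obstacle is the combinatorial rigidity argument that $12$ $c$-contributions force precisely the straight $\mt 2$-configuration together with the exact $4+4+4$ assignment of $\mt 0$'s to neighbors of $A$ — in particular ruling out configurations where a $\mt 0$ from one chain could pick up a $c$-contribution from a $\mt 2$ that is itself adjacent to another $\mt 2$ in a bent arrangement, or where the three $\mt 2$'s are not all mutually collinear. \Cref{fourteen-neighbors} and \Cref{thirteen-neighbors} are tailored to supply exactly the needed cardinality bounds, but care is required to check that every sub-case of how the three $\mt 2$'s can sit (straight, singly bent, doubly bent, or disconnected within the chain) yields strictly fewer than $12$ available $c$-contributions except the intended one; this is the crux, and the parity observation from \Cref{levels-required} is what makes the bookkeeping close.
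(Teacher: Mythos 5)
Your proposal follows essentially the same route as the paper's proof: use \Cref{levels-required} to force the full complement of twelve $c$-contributions, rule out the bent configuration via \Cref{thirteen-neighbors}, and apply \Cref{urgentcare-sep8} to the chain whose four $\mt 0$s occupy the neighbors of the middle $\mt 2$, concluding linkedness. Your write-up is in fact somewhat more explicit than the paper's (the parity-based identification of the ``middle'' chain and the final linking-number step are only implicit there); the one overstatement is that parity alone forces the remaining eight even-parity neighbors to split by $x(0)$-coordinate between the other two chains, but that claim is not needed for the conclusion.
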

\begin{proof}
Consider any optimal embedding.
In it, the embedded chain $\mt{2}^3$ can either be bent (\Cref{df:bent}) or not, as illustrated on the left and right, respectively, below.

\begin{tikzpicture}[nodes={font=\large}]
\matrix[matrix of nodes,
        nodes in empty cells,
        row sep=-\pgflinewidth,
        column sep=-\pgflinewidth,
        nodes={minimum size=7mm,anchor=center,draw=black}]{
  $\mt{2}$  \\
  $\mt{2}$ & $\mt{2}$   \\
};
\end{tikzpicture}
\begin{tikzpicture}[nodes={font=\large}]
\matrix[matrix of nodes,
        nodes in empty cells,
        row sep=-\pgflinewidth,
        column sep=-\pgflinewidth,
        nodes={minimum size=7mm,anchor=center,draw=black}]{
  $\mt{2}$ & $\mt{2}$ & $\mt{2}$ \\
};
\end{tikzpicture}

Let $f_2 = f_{\mt{2}^3\mt{1}^{9+2m},1}$.
Let $A=\{f_2(0),f_2(1,),f_2(2)\}$.
If $A$ is straight then $A$ has 14 neighbors,
of which 12 can contribute to the score and the last 2 are occupied by $f(3)$ and $f(11)$.
See \Cref{fourteen-neighbors}.

If $A$ is bent then $A$ has only 13 neighbors (\Cref{thirteen-neighbors}).
Therefore, any such $f$ for which $A(f)$ is bent yields a lower score than the intended, linked, embedding from \Cref{df:intended}
by \Cref{levels-required}.

Next, for an unbent chain we claim that any optimal fold has the middle $\mt{2}$ in $\mt{222}$ wrapped in a link.

Indeed, if a $\mt{01010101}$ has all four $\mt{0}$s next to $\mt{2}$s in an unbent $\mt{222}$ chain,
and one of these $\mt 0$s is next to the middle $\mt 2$ in the chain, then the $\mt{01010101}$
must be wrapped circularly around the middle $\mt{2}$ by \Cref{urgentcare-sep8}.
\end{proof}

\begin{remark}
	\Cref{link-main} does not claim that all the three 8-chains must be linked with the 12-chain in $\mathscr M_0$,
	and in fact we can show that there is an optimal embedding where only the middle 8-chains is linked with the 12-chain.
\end{remark}

\subsection*{Future work}
\Cref{link-main} opens the door to exploration of the extent of optimal links in HP models.
Finding these by brute force computation may be challenging, but our results indicate how combinatorial reasoning can be used instead.
Natural questions include:
\begin{enumerate}
	\item Is there a multiset $\mathscr M$ of cardinality 2 and a levels-of-hydrophobicity function $h$ such that
		all $h$-optimal embeddings of $\mathscr M$ make the two words linked?
	\item Is there a single word $w$ and function $h$ such that all $h$-optimal embeddings of $\{w\}$ are knotted?
\end{enumerate}

\subsection*{Acknowledgments}
This work was partially supported by grants from the Simons Foundation (\#704836
to Bj{\o}rn Kjos-Hanssen) and Majesco Inc.~(University of Hawai\textquoteleft i Foundation
Account \#129-4770-4).

\bibliographystyle{plain}
\bibliography{protein}
\end{document}